\documentclass{amsart}

\newif\ifdraft\draftfalse
\newif\ifcite\citefalse
\newif\ifblow\blowtrue

\usepackage{amsfonts,amssymb, amsmath}
\usepackage{oldgerm}
\usepackage{amscd}
\usepackage{lscape}
\usepackage{mathdots}
\usepackage[hyperindex]{hyperref}  
\usepackage[alphabetic]{amsrefs}
\ifdraft\ifcite\usepackage{showkeys}\else\usepackage[notcite,notref]{showkeys}\fi\fi


\setcounter{MaxMatrixCols}{10}

\newtheorem{theorem}{Theorem}[section]
\newtheorem{proposition}[theorem]{Proposition}

\newtheorem{lemma}[theorem]{Lemma}

\newtheorem{corollary}[theorem]{Corollary}

\theoremstyle{remark}

\theoremstyle{definition}

\theoremstyle{remark}

\newtheorem{remark}[theorem]{Remark}

\numberwithin{equation}{section}


\def\bc{\begin{cases}}
\def\ec{\end{cases}}

\def\ol{\overline}

\def\a{\alpha}
\def\Bbb{\mathbb}

\def\ch{{\mathcal H}}

\def\bc{{\mathbb C}}

\def\br{{\mathbb R}}

\def\bz{{\mathbb Z}}

\def\er{\eqref}

\def\bz{\mathbb Z}

\def\br{\mathbb R}
\def\bc{\mathbb C}

\def\lp2{L_pH_{2p}}
\def\bean{\begin{eqnarray}}
\def\eean{\end{eqnarray}}
\def\bea{\begin{eqnarray*}}
\def\eea{\end{eqnarray*}}
\def\beq{\begin{equation}}
\def\eeq{\end{equation}}
\def\beq*{\begin{equation*}}
\def\eeq*{\end{equation*}}
\def\bal{\begin{align*}}
\def\eal{\end{align*}}
\def\baln{\begin{equation}}
\def\ealn{\end{equation}}

\def\beg{\begin{gather*}}
\def\eng{\end{gather*}}
\def\bqu{\begin{question}}
\def\equ{\end{question}}

\def\ban{\begin{proof}[Answer]}
\def\ean{\end{proof}}

\def\ra{\Rightarrow}

\def\p{\partial}

\def\g{\gamma}

\def\on{\operatorname}

\def\bqu{\begin{question}}
\def\equ{\end{question}}

\def\0110{\begin{matrix} 0 & 1\\1&0\end{matrix}}

\def\fg{\mathfrak{g}}
\def\fh{\mathfrak{h}}

\def\fk{\mathfrak{k}}
\def\fl{\mathfrak{l}}

\def\fn{\mathfrak{n}}
\def\fo{\mathfrak{o}}
\def\fp{\mathfrak{p}}

\def\fs{\mathfrak{s}}

\def\ban{\begin{proof}[Answer]}
\def\ean{\end{proof}}
\def\wt{\widetilde}

\def\ben{\begin{equation}}
\def\een{\end{equation}}

\def\la{\langle}
\def\ra{\rangle}

\def\j1{{(j+1)}}

\def\e{\epsilon}

\newcommand\bcr{\bc\backslash \br_{\leq 0}}
\numberwithin{equation}{section} \allowdisplaybreaks


\begin{document}

\title
[General Toda systems with singular sources]
{Total masses of solutions to general Toda systems with singular sources}
\author{Debabrata Karmakar}
\email{debabrata@tifrbng.res.in}
\address{Tata Institute of Fundamental Research, Centre For Applicable Mathematics, Post Bag No 6503, GKVK Post Office, Sharada Nagar, Chikkabommsandra, Bangalore 560065, India}

\author{Chang-Shou Lin}
\email{cslin@math.ntu.edu.tw}
\address{Department of Mathematics, Taida Institute of Mathematical Sciences, National Taiwan University, Taipei 106, Taiwan}

\author{Zhaohu Nie}
\email{zhaohu.nie@usu.edu}
\address{Department of Mathematics and Statistics, Utah State University, Logan, UT 84322-3900, USA}

\author{Juncheng Wei}
\email{wei@math.cuhk.edu.hk}
\address{Department of Mathematics, Chinese University of Hong Kong, Shatin, NT, Hong Kong}


\subjclass[2010]{35J47, 35J91, 17B80}
\keywords{Toda systems, total masses, singular sources, Weyl group, $W$-invariants, Iwasawa decomposition}

\begin{abstract}
In this article we obtain total masses of solutions to the Toda system associated to a general simple Lie algebra with singular sources at the origin. The determination of such total masses is one of the important steps towards establishing the a priori bound for solutions to the mean field type of Toda system on compact surfaces.
The total mass is found to be related to  the longest element $\kappa$ in the Weyl group of the corresponding Lie algebra. This is the foundation to future work relating the local blowup masses (from analysis) with the Weyl group.

This work generalizes the previous works in \cite{LWY}, \cite{ALW} and \cite{BC} for Toda systems of types $A, G_2$ and $B, C$. 
However, a more Lie-theoretic method is needed here for the general case, and the method relies heavily on the DPW method, Drinfeld-Sokolov gauge and the $W$-invariants. The last crucial step for the total masses is obtained by applying the work of Kostant \cite{KosToda} on the one dimensional Toda lattice.
\end{abstract}

\maketitle

\section{Introduction}
Let $\mathfrak{g}$ be a complex simple Lie algebra and $(a_{ij})$ be its Cartan matrix of rank $n$. In this paper, we consider the following
open Toda system on $\br^2$ associated to $\mathfrak{g}$ with singular sources at the origin:
\begin{equation} \label{Toda}
\begin{cases}
\displaystyle
\Delta u_i + 4\sum_{j=1}^n a_{ij}e^{u_j} = 4 \pi \gamma_i \delta_0, &  \gamma_i > -1,  \\
\displaystyle
\int_{\mathbb{R}^2} e^{u_i} \ dx < \infty, &1 \leq i \leq n,
\end{cases}
\end{equation}
where $\delta_0$ is the Dirac measure at the origin. Here a solution means $u=(u_1, u_2, \cdots, u_n)$ with $u_i \in C^2({\mathbb{R}^2 \backslash \{0\}})$
satisfying the equation  \eqref{Toda} with zero on RHS on $ \mathbb{R}^2 \backslash \{0\}$ and 
\begin{equation*}
u_i(x) = 2\gamma_i \log |x| + O(1) \ \ \mbox{near} \ 0.
\end{equation*}
When the Lie algebra $\fg = A_1= \fs\fl_2$ whose Cartan matrix is $(2)$, the Toda system becomes the Liouville equation 
\begin{equation}\label{liou}
\Delta u + 8 e^u = 4\pi \gamma\delta_0, \quad \gamma>-1, \quad \int_{\br^2} e^{u} \,dx <\infty.
\end{equation}

The Toda system \er{Toda} and the Liouville equation \er{liou} arise in many physical and geometric problems. For example, in the Chern-Simons theory, the Liouville equation is related to the abelian gauge field theory, while the Toda system is related to nonabelian gauges (see \cites{Yang, Taran}). We expect the work in this paper to have applications in constructing non-topological solutions of the Chern-Simons theory. 
On the geometric side, the Liouville equation 
is related to conformal metrics on $S^2$ with conical singularities whose Gaussian curvature is 1. The Toda systems are related to holomorphic curves in 
projective spaces \cite{Doliwa} and the infinitesimal Pl\"ucker formulas \cite{GH}, and the periodic Toda systems are related to harmonic maps \cite{Guest}. 

The main purpose of this paper is to study the asymptotic behavior of $u_i(x)$ or equivalently, to compute the total mass of $u_i:$
\begin{equation}\label{mass}
\sigma_i(u) = \frac{4}{2 \pi} \int_{\mathbb{R}^2} e^{u_i} \ dx, \ \ \ \ 1 \leq i \leq n.
\end{equation} 

The calculation of the total mass $\sigma_i(u)$ plays an extremely important role in the proof of a priori bounds of solutions 
of the following mean field type of Toda system on a compact surface $M$: 
\begin{equation}\label{meanToda}
\Delta_g u_i + \sum_{j=1}^n a_{ij} \rho_j \left(\frac{h_j e^{u_j}}{\int_M h_j e^{u_j}} - \frac{1}{|M|}\right)
= 4\pi \sum_{j = 1}^N \gamma_{ij} \left(\delta_{p_j} - \frac{1}{|M|}\right).
\end{equation}
 One major issue of \eqref{meanToda} is to establish the a priori bound for all solutions. Suppose blow-up solutions do exist. 
 The crucial step for studying bubbling solutions is to calculate the local mass of such solutions near each blow-up point. This calculation of local masses is a highly nontrivial problem. The calculation 
of $\sigma_i(u)$ for an entire solution is the first step towards solving this problem. For the $A_n$ cases, the a priori bounds
have been established successfully by carrying out this strategy in  \cite{LWZ17}.
The other cases will be pursued in future works. 

The classification problem for the solutions to the Toda systems has a long history. For the Liouville equation \er{liou}, Chen and Li \cite{CL} classified their solutions without the singular source, and 
Prajapat and Tarantello \cite{PT} completed the classification with the singular source. For general $A_n = \fs\fl_{n+1}$ Toda systems, Jost and Wang \cite{JW} classified the solutions without singular sources, and Ye and two of the authors \cite{LWY} completed the classification with singular sources. This later work also invented the method of characterizing the solutions by a complex ODE involving the $W$-invariants of the Toda system. The work \cite{LWY} has also established  the nondegeneracy result for the corresponding linearized systems. The case of $G_2$ Toda system was treated in \cite{ALW}. In \cite{BC}, one of us generalized the classification to Toda systems of types $B$ and $C$ by treating them as reductions of type $A$ with symmetries and by applying the results from \cite{N1}. The complete classification and nondegeneracy of Toda system \eqref{Toda} for general simple
Lie algebras shall be pursued in another paper.

For a solution $u = (u_1, u_2 , \cdots, u_n)$ of \eqref{Toda}, define
\begin{equation} \label{defineUi}
U_i = \sum_{j = 1}^n a^{ij}u_j \ \ \mbox{and} \ \  \gamma^i = \sum_{j = 1}^n a^{ij} \gamma_j \ \ \mbox{for} \ \  1 \leq i \leq n, 
\end{equation}
where $(a^{ij})$ is the inverse of the Cartan matrix $(a_{ij}).$ Then the $U_i$ satisfy
\begin{equation}\label{toda2}
\begin{cases}
\displaystyle
U_{i, z\bar z} + \exp\Big(\sum_{j=1}^n a_{ij} U_j\Big) = \pi \gamma^i \delta_0,\\
\displaystyle
\int_{\br^2} e^{u_i}\, dx<\infty,
\end{cases}
\end{equation}

We remark that the formula $\Delta = 4\frac\p{\p z}\frac\p{\p{\bar z}}$ is responsible for the slightly unconventional coefficient 4 on the left of \er{Toda}, which furthermore causes the coefficient $4$ in our definition of the total mass in \er{mass}. 
This coefficient can be easily dealt with (see \cite{BC}*{Remark 1.4}). 

 It turns out that the asymptotic behavior of the solution $u$ is related to the 
longest element of the Weyl group of the Lie algebra $\mathfrak{g}.$ Let $\kappa$ denote the longest element of the Weyl group. Then 
$-\kappa$ has the property that {\it it permutes positive simple roots $\{\alpha_1, \alpha_2, \cdots, \alpha_n\},$
i.e., $-\kappa \alpha_i$ is one of $\{\alpha_1, \alpha_2, \cdots, \alpha_n\}$}.  
Throughout the paper, we use $\{\alpha_1, \alpha_2, \cdots, \alpha_n\}$ to denote a fixed choice of positive simple roots of $\mathfrak{g}$ and $e_{\alpha_i}, e_{-\alpha_i}$
are the corresponding suitably chosen root vectors. The following is 
the main theorem of this article.

\begin{theorem} \label{mainthm}
The solutions $U_i$ in \eqref{toda2} satisfy that
\begin{equation} \label{asympexpUi}
U_i(z) = 2(\gamma^i - \langle\omega_i - \kappa \omega_i,w_0\rangle)\log|z| + O(1), \ \ \mbox{as} \ z \rightarrow \infty, \ \mbox{and} 
\end{equation}
\begin{equation} \label{totalmass}
\sigma_i(u) = 2 \langle \omega_i - \kappa \omega_i, w_0 \rangle.
\end{equation}
Moreover, $\sigma_i(u)$ is an even integer if $\gamma_i \in \mathbb{Z}_{\geq 0}.$
\end{theorem}
In Theorem \ref{mainthm},  $\langle \cdot , \cdot \rangle$
is the natural pairing between the real Cartan subalgebra $\mathfrak{h}_0$ and its dual $\mathfrak{h}_0^{\prime} := \mbox{Hom}(\mathfrak{h}_0, \mathbb{R}),$
$\omega_i \in \mathfrak{h}_0^{\prime}$ is the $i$-th fundamental weight and $w_0 \in \mathfrak{h}_0$ is defined by \eqref{definew0}. It is interesting that the total mass
is related to the longest element of the Weyl group. Recall that the total mass always satisfies the Pohozaev identity.
Theorem \ref{mainthm} shows that the RHS of \eqref{totalmass} satisfies the Pohozaev identity. Thus here is an example of the connection of the Pohozaev identity with the action of Weyl group. See \cite{LWZ17} in this aspect. This connection will be pursued further in future works.

Recall that $-\kappa \alpha_i \in \{\alpha_1, \alpha_2, \cdots, \alpha_n\}.$ If $-\kappa \alpha_i = \alpha_k,$ then the asymptotic behavior of $u_i$ has a clean expression.

\begin{theorem} \label{mainthmcor}
 If $-\kappa \alpha_i = \alpha_k$ and  $u = (u_1,u_2, \cdots, u_n)$ is a solution of \eqref{Toda} then
\begin{equation} \label{asympexpui}
u_i = -2(2 + \gamma_k) \log |z| + O(1), \ \ \mbox{as} \ z \rightarrow \infty.
\end{equation}
\end{theorem}

For the sake of  reference, we would like to write down all the total masses of solutions in terms of the singular data. For the case of $A_n, B_n, C_n$ and $G_2$, 
they have been given in \cite{LWY, ALW, BC}. So we list the remaining cases in the following statement.
\begin{corollary} \label{maincor}
The total masses of solutions are quantized as follows:
\begin{enumerate}
\item When $\mathfrak{g} = D_n,$ 
\begin{equation*} 
 \sigma_i(u)= 
\begin{cases}
4 \sum_{j = 1}^n a^{ij} (1 + \gamma_j), & \mbox{if $n$ is even}, \notag \\
4 \sum_{j = 1}^{n-2} a^{ij} (1 + \gamma_j) + 2 (a^{i, n-1} + a^{i n}) (2 + \gamma_{n-1} + \gamma_{n}), & \mbox{if $n$ is odd}.
\end{cases}
\end{equation*}
\item  When $\mathfrak{g} = E_6,$
\begin{align*} 
\sigma_i(u) &=
  2(a^{i1} + a^{i6})(2 + \gamma_1 + \gamma_6)
+ 2(a^{i3} + a^{i5})(2 + \gamma_3 + \gamma_5)  \notag \\
 &+ 4 a^{i2}(1 + \gamma_2) + 4 a^{i4}(1 + \gamma_4) \notag
\end{align*}
for all $1 \leq i \leq 6.$
\item When $\mathfrak{g} = F_4, E_7$ or $E_8,$
\begin{equation*}
\sigma_i(u)= 4 \sum_{j = 1}^n a^{ij} (1 + \gamma_j) \notag
\end{equation*}
for all $1 \leq i \leq n.$
\end{enumerate}
\end{corollary}
\begin{remark}
The entries of the inverse of the Cartan matrix of a simple Lie algebra are all positive \cite{InvC}. In Appendix B we have listed all the inverses of the Cartan matrices for simple Lie algebras.
\end{remark}

It is a well known fact that equation \eqref{toda2} is an integrable system \cite{LSbook}. From this point of view, Theorem \ref{mainthm} 
connects analysis of nonlinear PDEs with the theory of integrable systems. Our proof of Theorem \ref{mainthm} takes two advantages from the integrable system: the standard DPW method \cite{DPW} and the Drinfeld-Sokolov gauge \cite{DS}. The DPW method is the step to explain why \eqref{toda2}  
is an integrable system, that is, solutions can be obtained from some holomorphic data. The DPW method has been discovered in 
\cite{DPW} and developed into a very powerful method for some integrable systems related to nonlinear elliptic equations. We refer the reader to \cite{DPW} and \cite{GLI} and the references therein for the DPW method and its applications. However, our case is simpler
because the open Toda system only requires the standard decomposition of Lie groups, not loop groups. In any case, the DPW method could give us a local solution. On the other hand the well known Drinfeld-Sokolov gauge \cite{DS} yields the so called $W$-invariants, which are differential polynomials of the solutions (see section 4). More important is that {\it these $W$-invariants are rational functions with poles only at the origin.} For the case of $A_n,B_n,C_n$ and $G_2$, the complete set of $W$-invariants gives rise to a Fuchsian differential operator 
of one complex variable \cite{Feher1}. However, this does not hold  for other simple Lie algebras. These $W$-invariants are still essential in our approach. 
Indeed, applying $W$-invariants, our solutions can globally be expressed in terms of representations of the Lie algebra. 
To write down the statement we need to introduce the following notations. 

Let $G$ be a connected complex Lie group whose Lie algebra is $\mathfrak{g}.$ The classical Iwasawa decomposition says that $G=KAN$ (see Appendix subsection \ref{Iwasawasec}) with $K$ maximally compact, $A$ abelian and $N$ nilpotent. It is well known that the groups $A$ and $N$ are {\it simply connected.} 

Let $V_i$ be the $i$-th fundamental representation of $\mathfrak{g}$ and denote the highest weight vector by $|i \rangle.$ Let $V_i^*$ be the dual right representation of $\mathfrak{g}$ and choose the lowest weight vector denoted by by $\langle i |$ so that
$\langle i||i\rangle = 1.$
A fact in representation theory is that all the representations of $\mathfrak{g}$ can be lifted to $G$ if $G$ is simply connected. In particular, we can lift all the fundamental representations to $A$ and $N.$ Therefore the pairing $\langle i| g | i \rangle, g \in AN$ is well defined.

\begin{theorem} \label{main}
Let $\Phi : \mathbb{C} \backslash \mathbb{R}_{\leq 0} \rightarrow N$ be the unique solution of
\begin{equation}\label{develop0}
\begin{cases}
\displaystyle
\Phi^{-1}(z) \Phi_z(z) = \sum_{i=1}^n z^{\gamma_i} e_{-\alpha_i} \ \ \mbox{on} \ \mathbb{C} \backslash \mathbb{R}_{\leq 0} \\
\displaystyle
\lim_{z \rightarrow 0} \Phi(z) = Id,
\end{cases}
\end{equation}
where $Id \in G$ is the identity element. Then all the solutions to  \eqref{toda2} are
\begin{equation} \label{Ui}
U_i(z) = - \log \langle i| \Phi^*g \Phi | i\rangle + 2 \gamma^i \log |z|, \ \ 1 \leq i \leq n,
\end{equation}
where $g$ is of the form $g = C^* \Lambda^2 C, C \in N$ and $\Lambda \in A$, and ${}^*$ is defined in \er{dual}. 
\end{theorem}

However, our main concern is the asymptotic behavior of $U_i$ at $\infty.$ It seems that the expression \eqref{Ui} is not explicit enough to serve our purpose. Inspired by Kostant's work \cite{Kos Toda}, we find that the suitable expression can be written as an infinite series in the universal enveloping algebra of $\mathfrak{n}_-$ (i.e., an element of $\hat D (N)$ introduced in \cite{Kos Toda}). Using this expression, we could obtain the asymptotic of the R.H.S of \eqref{Ui} to prove Theorem \ref{mainthm}.


\vspace{0.2 cm}

The paper is organized as follows. Because of its importance to the analysis of Toda systems, we  first prove Theorem \ref{mainthm} in section 2 assuming Theorem \ref{main}. A crucial coefficient is assured nonzero 
by \cite{Kos Toda} (see \eqref{Kosresult1},\eqref{Kosresult2}). In section 3, we prove that any solution of \eqref{toda2}  locally can be expressed by some holomorphic data, and this process also explains why \eqref{toda2} is called an integrable system (other methods can be found in \cite{CWAnn}, \cite{ Uhlenbeck }, \cite{LSbook}). Together with $W$-invariants obtained by Drinfeld-Sokolov gauge, we prove Theorem \ref{main}
in sections 4, 5, and 6. The $W$-invariants play the major role in the process from the local expression to the global one. It is expected that the $W$-invariants has more applications to studying \eqref{toda2} with many singular points. In section 4 we introduce the $W$-invariants for equation \eqref{toda2} in a general domain
and obtain some explicit form for the case with single singularity in section 5. Finally, Theorem \ref{main} is proved in section 6. We give two Appendices. Among other things, Appendix A  gives very brief
explanation of some basic terminologies from representation theory of Lie algebra and Lie group, which is needed in section 3 and all subsequent sections. In Appnedix B we list all the Cartan matrices for simple Lie algebras and their inverse matrices.

\medskip
\noindent{\bf Acknowledgment.} Z. Nie thanks the University of British Columbia and the National Taiwan University for hospitality during his visits, where part of this work was done.  He acknowledges the Simons Foundation through Grant \#430297. 
The research of J. Wei is partially supported by NSERC of Canada. 
We thank Prof. L. Feh\'er for very useful correspondences on section 4.

\section{Proof of Theorem \ref{mainthm}}
In this section we will prove Theorem \ref{mainthm} and Theorem \ref{mainthmcor} by assuming Theorem \ref{main}.

Let us first introduce our basic setup in Lie algebras and representation theory. More details on representation theory can be found in Appendix A. Let $\mathfrak{g}$ be the complex simple Lie algebra for our Toda system \er{Toda}. Let $\mathfrak{h}$ be a fixed Cartan subalgebra, whose dimension $n$ is the rank of $\mathfrak{g}.$ Let $\mathfrak{g} = \mathfrak{h} \oplus \oplus_{\alpha \in \Delta} \mathfrak{g}_{\alpha}$ be the root space decomposition of $\mathfrak{g}$ with respect to $\mathfrak{h},$ where $\Delta$ denotes the set of roots. The roots are linear functionals on the Cartan subalgebra $\mathfrak{h},$ and for $X_{\alpha} \in \mathfrak{g}_{\alpha}$ and $H \in \mathfrak{h},$
we have $[H, X_{\alpha}] = \alpha(H)X_{\alpha}.$ 

Let $\Delta = \Delta^{+} \cup \Delta^{-}$ be a fixed decomposition of the set of roots into the sets of positive and negative roots,
and let $\Pi = \{\alpha_1, \alpha_2, \cdots, \alpha_n\}$ be the set of positive simple roots.

It is known that $\dim_{\mathbb{C}} \mathfrak{g}_{\alpha} = 1$ for all $\alpha \in \Delta,$ and we choose a Chevalley basis 
$\{e_{\alpha} \in \mathfrak{g}_{\alpha}, \alpha \in \Delta; h_{\alpha_i} \in \mathfrak{h}, 1 \leq i \leq n\}$ (see \cite{Humphrey}*{Theorem 25.2}). Then we have 
\begin{equation*}
[e_{\alpha_i}, e_{-\alpha_j}] = \delta_{ij}h_{\alpha_i}, \ \ \alpha_i(h_{\alpha_i}) = 2.
\end{equation*}
Furthermore, the Cartan matrix $(a_{ij})$ of $\mathfrak{g}$ is defined by 
\begin{equation}\label{cartan}
a_{ij} = \alpha_i(h_{\alpha_j}), \ \ 1 \leq i,j \leq n.
\end{equation}

Note that the real Cartan subalgebra is 
\begin{equation*}
\fh_0 = \{H\in \fh \,|\, \a(H)\in \br, \forall \a\in \Delta\} = \oplus_{i=1}^n \br h_{\a_i}.
\end{equation*}
Let $\mathfrak{h}_0^{\prime} := \mbox{Hom}(\mathfrak{h},\mathbb{R})$ denote the dual space of $\mathfrak{h}_0$ and 
let $\langle \cdot,\cdot\rangle$ denote the pairing of $\mathfrak{h}_0^{\prime}$ and $\mathfrak{h}_0.$ 

We introduce the following subalgebras of $\mathfrak{g}$
\begin{equation}\label{nn}
\mathfrak{n}_- = \oplus_{\alpha \in \Delta^+} \mathfrak{g}_{-\alpha}, \ \ \mathfrak{n}_+ = \oplus_{\alpha \in \Delta^+} \mathfrak{g}_{\alpha}.
\end{equation}
We also introduce the following subspace (from the principal grading \er{grading} of $\mathfrak{g}$)
\begin{equation}\label{g-1}
\mathfrak{g}_{-1} = \oplus_{i=1}^{n} \mathfrak{g}_{-\alpha_i}.
\end{equation}

This section is divided into $3$ subsections. In the first two subsections we introduce some notations and definitions and develop the necessary ingredients, and finally in the last subsection we give the proof of our main theorems. 







\subsection{Notations}
Following \cite{LWY}, we denote
\begin{equation*}
\mu_i = \gamma_i + 1 >0, \ \ \ 1\leq i \leq n.
\end{equation*}
Inspired by  \cite{KosToda} we introduce the following element
\begin{equation} \label{definew0}
 w_0 = \sum_{i = 1}^n \mu_i E_i \in \mathfrak{h}_0,
\end{equation}
 where $E_i = \sum_{k=1}^n a^{ki}h_{\alpha_k}.$ The elements $E_j$ and $w_0$ satisfy the properties
\begin{equation} \label{propertyejw0}
\alpha_i(E_j)= \langle\alpha_i, E_j \rangle = \delta_{ij} \quad \mbox{and} \quad 
\alpha_i(w_0)= \langle \alpha_i, w_0 \rangle = \mu_i.
\end{equation}
 We also introduce
\begin{align} \label{definexi}
\begin{split}
\zeta(z) &= \sum_{i = 1}^n z^{\gamma_i}e_{-\alpha_i} \in \mathfrak{g}_{-1} \\
\xi(z) &= z\zeta (z)= \sum_{i = 1}^n z^{\mu_i}e_{-\alpha_i} \in \mathfrak{g}_{-1},
\end{split}
\end{align}
where $z\in \bc\backslash \br_{\leq 0}$ and we use the principal branch for the power functions.  
To find the explicit expression for $\Phi$ in \eqref{develop0}, we introduce the following setup after \cite{KosToda}.

Let $\mathcal{S}$ be the set of all finite sequences
\begin{equation} \label{definitionofs}
s = (i_1,i_2, \cdots, i_k), \quad 1 \leq i_j \leq n,\ k \in \mathbb{N}.
\end{equation}
We denote by $|s|$ the length $k$ of the element $s \in \mathcal{S}.$ For $s \in \mathcal{S},$ we introduce 
\begin{equation} \label{definephi}
\begin{split}
\varphi(s) &= \sum_{j = 1}^{|s|} \alpha_{i_j}\in \fh_0',\\
\varphi(s,w_0) &= \varphi(s)(w_0) = \langle \varphi(s), w_0\rangle = \sum_{j = 1}^{|s|} \mu_{i_j}>0.
\end{split}
\end{equation}
Note that in the last equality 
we have used the property \eqref{propertyejw0}.
For $|s| =0,$ we define $\varphi(s) = 0.$

For $0 \leq j \leq |s| - 1,$ let $s_j \in \mathcal{S}$ be the sequence obtained from $s$ by ``cutting off" the first $j$ terms (different from
\cite{KosToda})
\begin{equation} \label{definesj}
s_j = (i_{j + 1}, \cdots, i_{|s|}),
\end{equation}
and define 
\begin{equation*}
\displaystyle{p(s,w_0) = \prod_{j = 0}^{|s| - 1} \langle \varphi(s_j), w_0\rangle =(\mu_{i_{1}} + \cdots +\mu_{i_{k}})\cdots(\mu_{i_{k-1}} + \mu_{i_{k}}) \mu_{i_{k}}.}
\end{equation*}
We also define $p(s, w_0) = 1$ when $|s| = 0.$ Clearly when $|s| \geq 1,$ we have
\begin{equation}\label{cancel}
p(s,w_0) = \varphi(s,w_0)\, p(s_1,w_0).
\end{equation}
Let $U(\mathfrak{n}_{-})$ be the universal enveloping algebra of $\mathfrak{n}_{-}$ introduced in \er{nn}. For convenience, write
$e_{-i} = e_{-\alpha_i}$ for $1\leq i\leq n.$ For $s \in \mathcal{S}$ as in \eqref{definitionofs}, define
\begin{equation} \label{definee_s}
e_{-s} = e_{-i_k} \cdots e_{-i_2}e_{-i_1}.
\end{equation}
We note that the $\xi$ in \eqref{definexi} is
\begin{equation} \label{expressionxi}
\xi(z) = \sum_{i=1}^n z^{\varphi((i), w_0)} e_{-(i)},
\end{equation}
where the $s$ are the simplest $(i)$ for $1 \leq i \leq n.$



\subsection{Supplementary lemmas}
Now we are going to prove a formula which will be used later. 
Recall that elements $e_{-s}$ as defined in \eqref{definee_s} belong to $U(\fn_-).$ 
Let $H \in \mathfrak{h},$ then we claim that  $[H, e_{-s}] = -\la \varphi(s),H\ra e_{-s}.$  Indeed,
\begin{equation} \label{computation}
\begin{split}
[H, e_{-s}] &= \sum_{j = 1}^{|s|} e_{-i_{|s|}} e_{-i_{|s| -1}} \cdots e_{-i_{j+1}}[H, e_{-i_j}] e_{-i_{j-1}} \cdots e_{-i_{1}}, \\
                  &=  \sum_{j = 1}^{|s|} e_{-i_{|s|}} e_{-i_{|s| -1}} \cdots e_{-i_{j+1}}(-\alpha_{i_j}(H) e_{-i_j}) e_{-i_{j-1}} \cdots e_{-i_{1}}, \\
                  &= \left(-\sum_{j = 1}^{|s|}\alpha_{i_j}(H)\right) e_{-i_{|s|}} e_{-i_{|s| -1}} \cdots e_{-i_{j+1}} e_{-i_j} e_{-i_{j-1}} \cdots e_{-i_{1}}, \\
                  &= -\la \varphi(s),H\ra e_{-s}.
\end{split}
\end{equation}

\begin{lemma} \label{welldefinednessofY}
Let $V$ be a finite dimensional representation of $\mathfrak{g},$ then there exists a positive integer $k$ such that $e_{-s}v =0,$ 
for all $v \in V$ whenever $|s| \geq k.$
\end{lemma}
\begin{proof}
Since every finite dimensional representation of $\mathfrak{g}$ can be decomposed as a direct sum of irreducible representations,
without loss of generality we can assume $V$ is irreducible. 
We can further decompose $V$ as a direct sum of its weight spaces. Therefore, it is enough to prove the lemma for a weight vector. Let  $v^{\beta}$ be a weight vector corresponding to the weight $\beta.$ Then using \eqref{computation} we see that
\begin{align*}
H(e_{-s}v^{\beta}) &= [H, e_{-s}] v^{\beta} + e_{-s}H v^{\beta}, \\
&= -\la \varphi(s),H\ra e_{-s} v^{\beta} + e_{-s} \la \beta,H\ra v^{\beta} \\
& = \la -\varphi(s) + \beta,H\ra (e_{-s} v^{\beta}).
\end{align*}
If $|s| \neq |s^{\prime}|$ and $e_{-s}v^{\beta} \neq 0, e_{-s^{\prime}}v^{\beta} \neq 0 $ then $-\varphi(s) + \beta$
and $-\varphi(s^{\prime}) + \beta$ are different weights because $\varphi(s)$ and $\varphi(s^{\prime})$ are different as elements 
in $\mathfrak{h}_0'.$ Since there are only finitely many weights we have $e_{-s}v^{\beta} = 0,$ if $|s|$ is large. This
completes the proof of the lemma.
\end{proof}
Define formally, the operator
\begin{equation} \label{YZ}
\mathcal{Y}(z) = \sum_{s \in \mathcal{S}} \frac{z^{\varphi(s,w_0)}}{p(s,w_0)}e_{-s}, \ \ \ \mbox{for} \ z \in \mathbb{C} \backslash
\mathbb{R}_{\leq 0}.
\end{equation}
Define the action of $\mathcal{Y}(z)$ on a finite dimensional representation space $V$ by
\begin{equation} \label{YZaction}
\mathcal{Y}(z) v= \sum_{s \in \mathcal{S}} \frac{z^{\varphi(s,w_0)}}{p(s,w_0)}(e_{-s}v), \ \ \ \mbox{for} \ v \in V,  \ z \in \mathbb{C} \backslash
\mathbb{R}_{\leq 0}.
\end{equation}
By Lemma \ref{welldefinednessofY}, we see that the sum in \eqref{YZaction} is a finite sum and hence 
$\mathcal{Y}(z)v$ is well defined for all $v \in V$ and  $z \in \mathbb{C} \backslash
\mathbb{R}_{\leq 0}.$ The next proposition shows that this $\mathcal{Y}(z)$ is identical with $\Phi(z)$ as an operator acting on $V$. 

\begin{proposition} \label{actionsame}
Let $V$ be a finite dimensional representation of $\mathfrak{g}.$
Then the action of $\Phi(z) $ on $V$
coincides with the action of $\mathcal{Y}(z)$ on $V,$ for all $ \ z \in \mathbb{C} \backslash
\mathbb{R}_{\leq 0},$ where $\Phi$ is defined as in \eqref{develop0}.
\end{proposition}

\begin{proof}
We will show that the right action of $\Phi(z)$ and $\mathcal{Y}(z)$ on $V^{*}$ are the same, and then the proposition 
follows because
\begin{align*}
\langle w, \Phi(z)v \rangle = \langle w \Phi(z), v \rangle = \langle w\mathcal{Y}(z),v \rangle = \langle w,\mathcal{Y}(z)v \rangle
\end{align*}
for all $ v \in V, w \in V^{*}$ and  $z \in \mathbb{C} \backslash \mathbb{R}_{\leq 0}.$

Now we show that $w\Phi(z) = w\mathcal{Y}(z)$ for all $w \in V^{*},  \ z \in \mathbb{C} \backslash
\mathbb{R}_{\leq 0}.$ From \eqref{develop0} it follows that $w \Phi :  \mathbb{C} \backslash
\mathbb{R}_{\leq 0} \rightarrow V^{*} $ satisfies the ODE
\begin{align*}
\partial_z (w \Phi(z)) &= (w \Phi(z)) \zeta, \ \ \mbox{for} \  z \in \mathbb{C} \backslash \mathbb{R}_{\leq 0}, \\
w\Phi(0) &= w.
\end{align*}
Therefore it is enough to show that $w\mathcal{Y}(z)$ satisfies the same ODE with the same initial condition.

Since $\varphi(s, w_0)>0,$ the initial value $w\mathcal{Y}(0)$ equals to $w.$
Moreover, 
\begin{equation}\label{ode for y} 
\begin{split}
\partial_z (w \mathcal{Y}(z)) &= \sum_{s \in \mathcal{S}} \frac{\varphi(s,w_0) z^{\varphi(s,w_0)-1}}{p(s,w_0)}(we_{-s}), \\
&=  \frac{1}{z}\sum_{|s| \geq 1} \frac{ z^{\varphi(s,w_0)}}{p(s_1,w_0)}(we_{-s})
\end{split}
\end{equation}
by \er{cancel}. 
For $s \in \mathcal{S}$ and $1\leq i\leq n$, define $t^i = (i,s).$ Then from \eqref{definephi} and \eqref{expressionxi} we have
\begin{align} \label{expression1}
 \frac{ z^{\varphi(s,w_0)}}{p(s,w_0)}(we_{-s}) \xi &= \sum_{i=1}^n  \frac{ z^{\varphi(t^i,w_0)}}{p(s,w_0)}(we_{-t^i}), \notag \\
&=  \sum_{i=1}^n  \frac{ z^{\varphi(t^i,w_0)}}{p((t^i)_1,w_0)}(we_{-t^i}).\notag
\end{align}
Therefore summing over all $s \in \mathcal{S}$ we get 
\begin{equation*}
(w \mathcal{Y}(z)) \xi = \sum_{|s| \geq 1} \frac{ z^{\varphi(s,w_0)}}{p(s_1,w_0)}(we_{-s}).
\end{equation*}
Therefore \er{ode for y} implies that $\partial_z(w \mathcal{Y}(z)) = \frac{1}{z}(w \mathcal{Y}(z))\xi = (w\mathcal{Y}(z))\zeta,$ and hence completes the proof.
\end{proof}

\begin{remark}
It can be shown that the $\Phi$ defined by a system of ODE from \eqref{develop0} is also uniquely characterized by 
the following algebraic condition inspired by \cite{Kos Toda}:
\begin{equation*}
\begin{cases}
\Phi^{-1} w_0 \Phi = w_0 - \xi, \\
\Phi(0) = Id.
\end{cases}
\end{equation*}
\end{remark}

Next we will use this explicit expression for $\Phi$ in Proposition \ref{actionsame} to prove our main Theorem \ref{mainthm}.

\subsection{Proofs of Theorems}
Let $\lambda$ be a dominant integral weight and let $V^{\lambda}$ be the corresponding irreducible representation. Let $\kappa$ be the longest Weyl group element which maps positive roots to negative roots. Then $\kappa \lambda$ {\it is the lowest weight of} $V^{\lambda}$. 
 In the following calculations, we use the Hermitian metric $\{\cdot,\cdot\}$ on $V^{\lambda}$ which is invariant under the compact subgroup $K^s$ of $G^s$ (see \er{adjoint}). It also follows from the construction of $\{\cdot, \cdot\}$ that {\it the weight vectors belonging to different weight spaces in $V^{\lambda}$ are orthogonal with respect to} $\{\cdot,\cdot\}.$ 
 Choose vectors $v^{\lambda}$ in $V_{\lambda}$  and 
$v^{\kappa \lambda}$ in $V_{\kappa \lambda}$ in the one dimensional highest and lowest weight spaces such that 
\begin{equation*}
\{v^{\lambda}, v^{\lambda} \} = 1, \ \  \ \{v^{\kappa \lambda},v^{\kappa \lambda}\} = 1.
\end{equation*}
(One can choose $v^{\kappa \lambda}$ to be $s_0(\kappa) v^{\lambda},$ where $s_0(\kappa) \in G^s$ induces
the longest Weyl element $\kappa.$ See \cite{KosToda}*{Eq. (5.2.10)}.)

With the notation \eqref{definephi} and \eqref{definee_s}, the vector $e_{-s}v^{\lambda}$ is a weight vector with weight 
$-\varphi(s) + \lambda.$ Since different weight spaces are orthogonal, by Proposition \ref{actionsame} 
we have
\begin{equation} \label{computation2}
\left\{\Phi v^{\lambda}, v^{\kappa \lambda}\right\} = 
\left(\sum_{s \in \mathcal{S}^{\lambda}} \frac{c_{s,\lambda}}{p(s,w_0)}\right) z^{\langle\lambda - \kappa \lambda, w_0\rangle},
\end{equation}
where $\mathcal{S}^{\lambda} = \{s \in \mathcal{S} | -\varphi(s) + \lambda = \kappa \lambda\},$ and
$c_{s, \lambda} = \{e_{-s}v^{\lambda}, v^{\kappa \lambda} \}.$ 

\bigskip

\noindent{\bf Proof of Theorem \ref{mainthm} and Corollary \ref{maincor}.}
\begin{proof}
To show the asymptotic behavior of  \eqref{asympexpUi} we prove 
\begin{equation} \label{asymptbehavior1}
e^{-U_i} = |z|^{-2\gamma^{i}} |z|^{2\langle \omega_i - \kappa \omega_i, w_0 \rangle} \left(c_i + o(1)\right),
\ \ \mbox{as} \ z \rightarrow \infty,
\end{equation}
where $c_i >0.$
From \eqref{Ui} and \eqref{use vi}, we get
\begin{equation*}
e^{-U_i} = |z|^{-2\gamma^i} \langle i| \Phi^{*}C^{*}\Lambda^2 C \Phi|i \rangle
= |z|^{-2\gamma^i} \left\{ \Lambda C \Phi v^{\omega_i}, \Lambda C \Phi v^{\omega_i}\right\}.
\end{equation*}
Now we know that 
\begin{equation*}
 \left\{ \Lambda C \Phi v^{\omega_i}, \Lambda C \Phi v^{\omega_i}\right\}= \sum_{v}
\left|\left\{ \Lambda C \Phi v^{\omega_i}, v\right\}\right|^2,
\end{equation*}
where the sum ranges over an orthonormal basis of the $i$-th fundamental representation $V_i$ consisting of weight vectors.  

Let $v^{\beta}$ be a weight vector corresponding to a weight $\beta.$ Then from \eqref{adjoint} we get
\begin{equation*}
\left\{ \Lambda C \Phi v^{\omega_i}, v^{\beta}\right\} = \left\{\Phi v^{\omega_i}, C^{*}\Lambda v^{\beta}\right\}
= e^{\beta(H)} \left\{\Phi  v^{\omega_i}, C^{*}v^{\beta}\right\},
\end{equation*}
where $\Lambda = \exp(H)$ with $H \in \mathfrak{h}_0.$ Since $C^{*} \in {N}_{+},$ the subgroup of $G$ corresponding to $\fn_+$, we can write
$C^{*}v^{\beta} = v^{\beta} + w^{\beta},$ where $w^{\beta}$ belongs to the direct sum of the weight spaces
with weights strictly larger that $\beta$, that is, $w^\beta \in \oplus_{\a\in \Delta^+} V_{i, \beta + \a}$.  
Therefore, we have
\begin{equation*}
\left\{ \Lambda C \Phi v^{\omega_i}, v^{\beta}\right\}
= e^{\beta(H)}\left[\left\{\Phi v^{\omega_i}, v^{\beta}\right\} + \left\{\Phi v^{\omega_i}, w^{\beta}\right\}\right].
\end{equation*}
Now, if $e_{-s}v^{\omega_i} \neq 0,$ then it is a weight vector with weight $-\varphi(s) + \omega_i$
and since two weight vectors belonging to two different weight spaces are orthogonal, using Proposition
\ref{actionsame}, we see that the highest power of $z$ in $\left\{ \Lambda C \Phi v^{\omega_i}, v^{\beta}\right\}
$ is equal to the power of $z$ in $\left\{\Phi v^{\omega_i}, v^{\beta}\right\},$ which is $z^{\la\omega_i-\beta, w_0\ra}$, provided its coefficient is nonzero. To prove this we write $w^{\beta} = \sum_{\alpha \in \Delta^+} c_{\alpha} v^{\beta + \alpha},$ where $v^{\beta + \alpha} \in V_{\beta + \alpha}$ and $c_{\alpha} \in \mathbb{C}.$ Then using Proposition \ref{actionsame} we see that 
\begin{align*}
\{\Phi v^{\omega_i}, v^{\beta}\} = \left(\sum_{-\varphi(s) + \omega_i = \beta}\frac{\{e_{-s}v^{\omega_i},v^{\beta}\}}{p(s,w_0)}\right)z^{\langle \omega_i - \beta, w_0\rangle},
\end{align*}
 and 
\begin{align*}
\{\Phi v^{\omega_i}, w^{\beta}\} =\sum_{\alpha \in \Delta^+} c_{\alpha}\left(\sum_{-\varphi(s) + \omega_i = \beta + \alpha}\frac{\{e_{-s}v^{\omega_i},v^{\beta + \alpha}\}}{p(s,w_0)}\right)z^{\langle \omega_i - \beta - \alpha, w_0\rangle},
\end{align*}
Since $\langle \alpha, w_0 \rangle > 0,$ for any $\alpha\in \Delta^+$ (by \eqref{propertyejw0}) the conclusion follows. Thus we have proved that for $v^{\beta} \in V_{\beta}$ the highest power of $z$ in $\{ \Lambda C \Phi v^{\omega_i}, v^{\beta}\}$ is at most $z^{\langle \omega_i - \beta, w_0 \rangle }.$ Now $\kappa\omega_i$ is the lowest weight and all other weights $\beta$ are of the form $\beta = \kappa \omega_i + \sum_{j = 1}^n n_j \alpha_j,$ with $n_j \in \mathbb{N} \cup \{0\}$ and at least one $n_j >0.$ Therefore, $\langle \omega_i - \beta, w_0 \rangle < \langle \omega_i - \kappa\omega_i, w_0 \rangle $ for all weights $\beta \neq \kappa\omega_i,$
and hence  the leading term in $\sum_{v} \left|\left\{ \Lambda C \Phi v^{\omega_i}, v\right\}\right|^2$ corresponds to the leading term in $\left|\left\{ \Lambda C \Phi v^{\omega_i}, v^{\kappa \omega_i}\right\}\right|^2.$
From \eqref{computation2}, we have
\begin{equation*}
\left\{\Phi v^{\omega_i}, v^{\kappa \omega_i}\right\} = 
\left(\sum_{s \in \mathcal{S}^{\omega_i}} \frac{c_{s,\omega_i}}{p(s,w_0)}\right) z^{\langle \omega_i - \kappa\omega_i, w_0\rangle}.
\end{equation*}
The coefficient $\sum_{s \in \mathcal{S}^{\omega_i}} \frac{c_{s,\omega_i}}{p(s,w_0)}$ is considered
in \cite{Kos Toda}*{Eq (5.95)}. (Note our different convention in \eqref{definesj} for $s_j,$ so our $s$ corresponds
to the $\bar s$ there). It is shown in \cite[Proposition 5.5.1]{Kos Toda} that there exists an element $d(w)$ depending on $w_0$ such that $d(w) \in \mathcal{H}_0$, the real Cartan subgroup of $G$ corresponding to $\fh_0$, and in \cite[Proposition 5.9.1]{Kos Toda} that
\begin{equation} \label{Kosresult1}
\sum_{s \in \mathcal{S}^{\omega_i}} \frac{c_{s,\omega_i}}{p(s,w_0)} = \pm d(w)^{- \kappa \omega_i}.
\end{equation}
Here again if $d(w) = \exp(H_1)$ with $H_1 \in \mathfrak{h}_0,$ $d(w)^{-\kappa \omega_i}$ is defined by
\begin{equation} \label{Kosresult2}
d(w)^{-\kappa \omega_i} = e^{-\kappa \omega_i(H_1)}>0.
\end{equation}
Therefore,
\begin{equation*}
\left\{ \Lambda C \Phi v^{\omega_i}, \Lambda C \Phi v^{\omega_i}\right\}=
e^{\kappa \omega_i(H)} \left(\left\{\Phi v^{\omega_i}, v^{\kappa \omega_i}\right\} + \mbox{lower order terms} \right),
\end{equation*}
and hence \eqref{asymptbehavior1} holds with
\begin{equation} \label{defineci}
c_i = \left(e^{\kappa \omega_i(H)}\right)^2 \left(\sum_{s \in \mathcal{S}^{\omega_i}} \frac{c_{s,\omega_i}}{p(s,w_0)}\right)^2 >0.
\end{equation}
Now integrating \eqref{toda2} and using \eqref{asympexpUi}, we have 
\begin{align*}
4 \int_{\mathbb{R}^2} e^{u_i} \ dx &= 4\pi \gamma^i - \lim_{R \rightarrow \infty} \int_{\partial B_R}
\frac{\partial U_i}{\partial \nu} \ ds \\
&=4\pi \langle \omega_i - \kappa \omega_i, w_0 \rangle,
\end{align*}
and hence $\sigma_i(u) = 2 \langle \omega_i - \kappa \omega_i, w_0 \rangle.$  This proves \eqref{totalmass}.

By the definition of the Weyl group and that $\omega_i$ belongs to the dominant chamber of the integral weight lattice, it is known that $\omega_i - \kappa \omega_i$ belongs to the root lattice, that is, there exist $n_{ij}\in \bz_{\geq 0}$ such that 
$\omega_i - \kappa \omega_i = \sum_{j=1}^n n_{ij}\a_j.
$
Then if $\gamma_j\in \bz_{\geq 0}$, we have 
$$
\sigma_i(u) = 2\la \omega_i - \kappa \omega_i, w_0\ra = 2\sum_{j=1}^n n_{ij} (1+\gamma_j)
$$
is an even integer. 

For concreteness and to prove Corollary \ref{maincor}, we also proceed as follows. Using $\omega_i = \sum_{j=1}^n a^{ij}\alpha_j$ we get
\begin{equation} \label{computew}
2 \langle \omega_i - \kappa \omega_i, w_0\rangle = 2\sum_{j = 1}^n a^{ij} \langle \alpha_j - \kappa \alpha_j, w_0\rangle.
\end{equation} 
Since $-\kappa$ permutes the simple roots $\{\alpha_1, \alpha_2, \cdots, \alpha_n\}, -\kappa \alpha_i = \alpha_k$ for some $\alpha_k.$  
 In order to prove that $\sigma_i(u)$ is an even
integer when $\gamma_i \in \mathbb{Z}_{\geq 0}$ we need to compute the $-\kappa\alpha_i, 1 \leq i\leq n.$

Except for the three cases $A_n, E_6$ and $D_n$(with $n$ odd), $-\kappa = Id.$ 
Therefore using  the property  \eqref{propertyejw0} we get from \eqref{computew}
\begin{equation*}
\sigma_i(u) =4\sum_{j=1}^n a^{ij} \langle \alpha_j , w_0\rangle =4 \sum_{j=1}^n a^{ij}(1 + \gamma_j),
\end{equation*}
except for the Lie algebras $A_n, E_6$ and $D_n$(with $n$ odd). Now we see that except for the Lie algebras mentioned above, 
$2a^{ij} \in \mathbb{N}$ for all $1\leq i,j \leq n$ (see Appendix B). This proves that $\sigma_i(u) $ is an even integer if $\gamma_{i} \in \mathbb{Z}_{\geq 0}$ except for those three Lie algebras. 

For the $A_n$ case
\begin{equation*}
-\kappa \alpha_i = \alpha_{n+1-i}, \ \ 1\leq i \leq n,
\end{equation*}
so that we have
\begin{align*}
\sigma_i(u) = 2 \sum_{j = 1}^n a^{ij}(2 + \gamma_j + \gamma _{n+1-j}) =2 \sum_{j=1}^n (a^{ij} + a^{i, n+1-j})(1 + \gamma_j), \ 1 \leq i \leq n.
\end{align*}
The inverse Cartan matrix for $A_n$ satisfies $a^{ij} + a^{i, n+1-j} \in \mathbb{N}$ (see Appendix B) and hence this proves  the result for the $A_n$ case.
 
For the $D_{n}$ case with $n$ odd, we have
\begin{align*}
-\kappa \alpha_{n-1} = \alpha_{n},\quad -\kappa \alpha_{n} = \alpha_{n-1},\quad -\kappa \alpha_i = \alpha_i, \ 1 \leq i \leq n-2,
\end{align*}
that is, $-\kappa$ permutes the last two roots and preserves the others. This proves $(i)$ of Corollary \ref{maincor}. We can easily check that for $D_n$ (with $n$ odd), $a^{i, n-1} + a^{in}\in \mathbb{N}$ and $ 2a^{ij} \in \mathbb{N}$ for all $1 \leq i \leq n, 1 \leq j \leq n-2,$ and hence proves that $\sigma_i(u) \in 2 \mathbb{N}$ provided $\gamma_i \in \mathbb{Z}_{\geq 0}, 1\leq i \leq n.$

For $E_6$ we have
\begin{align*}
-\kappa \alpha_1 = \alpha_6, \ &-\kappa \alpha_6 = \alpha_1, \\
-\kappa \alpha_3 = \alpha_5, \ &-\kappa \alpha_5 = \alpha_3, \\
-\kappa \alpha_2 = \alpha_2, \ &-\kappa \alpha_4 = \alpha_4. 
\end{align*}
This then proves  $(ii)$ of Corollary \ref{maincor}.  The inverse Cartan matrix for $E_6$ satisfies $a^{i1}+ a^{i6}, a^{i3}+a^{i5}, a^{i2},
a^{i4} \in \mathbb{N},$ and hence $\sigma_i(u) \in 2 \mathbb{N}$ if $\gamma_i\in \mathbb{Z}_{\geq 0}.$

This completes the proof of Theorem \ref{mainthm} together with Corollary \ref{maincor}.
\end{proof}

\medskip

\noindent{\bf Proof of Theorem \ref{mainthmcor}.}
\begin{proof}
Since $u_i = \sum_{j=1}^n a_{ij}U_j, \gamma_i = \sum_{j=1}^n a_{ij}\gamma^j, \alpha_i = \sum_{j=1}^n a_{ij}\omega_j,$ and $\langle\alpha_i,w_0\rangle = \mu_i,$ and assuming that $-\kappa\alpha_i = \alpha_k$
for some $1\leq k\leq n,$ we see that 
\begin{align*}
u_i &= \sum_{j = 1}^n a_{ij}U_j = 2(\gamma_i - \langle \alpha_i - \kappa\alpha_i, w_0\rangle)\log |z| + O(1) \\
&= 2(\gamma_i - \mu_i - \mu_k) \log |z| + O(1), \\
&= -2(2+\gamma_k) \log |z| + O(1),\quad \text{as }z\to \infty. 
\end{align*}
This is \eqref{asympexpui} and completes the proof.
\end{proof}



\section{The DPW method and Local solutions}

In this section, we show that any solution to the Toda system \er{toda2} locally comes from holomorphic data. Our approach follows  \cite{GL}, \cite{LS}. 
In \cite{GL}, a similar process was applied for the periodic Toda systems by the Iwasawa decomposition for loop groups. 


For simplicity, we introduce the notation $\bc^* = \bc \backslash \{0\}$ and also recall $\fg_{-1}$ from \er{g-1} and 
the decomposition $\mathfrak{g} = \mathfrak{n}_- \bigoplus \mathfrak{n}_+ \bigoplus \mathfrak{h}$ in view of \er{nn}. 
 Before we proceed we would like to mention a few words about the Gauss decomposition of Lie groups, which is one main ingredient 
of our proof. 

Let $G$ be a connected complex Lie group whose Lie algebra is $\fg$. Let $\ch$ be the Cartan subgroup of $G$ corresponding to $\fh$. Denote the subgroups of $G$ corresponding to $\fn_-$ and  $\fn_+$ by $N_-$ and $N_+$ respectively.
The Gauss decomposition (see \cite{LSbook} and  \cite{KosToda}*{Eq. (2.4.4)}) says that there exists an open and dense subset $G_r$ of $G$, called the regular part, such that 
\begin{equation}\label{Gdec}
G_r = N_-\ch N_+ = N_- N_+ \ch.
\end{equation}
We note that $\ch N_+ = N_+ \ch$ because $hnh^{-1} \in N_+$, where $h\in \ch$ and $n\in N_+$. 
Clearly $G_r$ contains the identity element of $G$.
 Now if $L \in N_-$ then $L^* \in N_+$ (see \eqref{dual}). Since the groups $N_+$ and $N_-$ are simply connected  the quantity $\la i | L^* L | i \ra$ is well defined.
\begin{theorem}\label{Taiwan} Let $U = (U_1, U_2, \cdots, U_n)$ be a solution to \er{toda2} and $z_0 \in \mathbb{C}^*.$ Then the following results hold:
\begin{enumerate}
\item There exist holomorphic maps $\eta$ and $L$ from a neighborhood $D$ of $z_0$ to $\mathfrak{g}_{-1}$ and $N_{-}$
such that 
\begin{equation} \label{the eta}
L^{-1}(z)L_z(z) = \eta (z) :=\sum_{i=1}^n f_i(z) e_{-\alpha_i}, \ \ L(z_0) = Id,
\end{equation}
and $f_i(z) \neq 0$ in $D.$

\item Each component of $U$ can be written as:
\begin{equation}\label{loc sol}
U_i = -\log \la i | L^* L | i \ra + \sum_{j=1}^n a^{ij} \log |f_j|^2, \ z \in D.
\end{equation}
\end{enumerate}
\end{theorem}

\begin{proof}
With out loss of generality we can assume $z_0 = 1.$ Define
\begin{align} \label{Aaa}
A &= -\sum_{i=1}^n \tfrac{1}{2}U_{i, z} h_{\a_i} + \sum_{i=1}^n \exp\Big({\tfrac{1}{2} \sum_{j=1}^n a_{ij} U_j}\Big) e_{-\a_i},
\end{align}
then
\begin{align}  \label{Att}
A^\theta &= \sum_{i=1}^n \tfrac{1}{2}U_{i, \bar z} h_{\a_i} - \sum_{i=1}^n \exp\Big({\tfrac{1}{2} \sum_{j=1}^n a_{ij} U_j}\Big) e_{\a_i},
\end{align}
where $\theta$ is defined in \er{def theta}. 
Eq. \er{toda2} has the following zero-curvature equation on $\bc^*$
\begin{align}
[ \p_z + A, \p_{\bar z} + A^\theta] &= 0, \quad \text{that is,}\label{zero-curv}\\
-A_{\bar z} + (A^\theta)_z + [A, A^\theta] &= 0. \notag
\end{align}

The zero-curvature equation can also be written as the Maurer-Cartan equation 
$$
d\omega + \frac{1}{2}[\omega, \omega]=0
$$
for the following Lie algebra valued differential form 
$$
\omega = A\, dz + A^\theta d\bar z \in \Omega^1(\bc^*, \fg).
$$
With $dz = dx_1+ i\,dx_2$ and $d\bar z = dx_1 - i\,dx_2$, it is also 
$$
\omega = (A + A^\theta) dx_1 + i(A - A^\theta) dx_2. 
$$
Since the Cartan involution $\theta$ on $\fg$ is conjugate linear, we see that the zero curvature connection $\omega$ takes value in the fixed subalgebra $\fg^\theta = \fk$.
Therefore by \cite{Sharpe}*{Theorems 6.1 and 7.14}, there exists a map from the simply-connected domain
$$
F : \bcr \to K\subset G
$$
to the compact subgroup $K=G^\theta$ such that 
\begin{equation}
\label{get F}
\begin{cases} 
F^{-1} dF = \omega\\
F(1) = Id.
\end{cases}
\end{equation}
Therefore, 
\begin{equation}\label{more}
F^{-1} F_z = A,\quad F^{-1} F_{\bar z} = A^\theta.
\end{equation}

For a small neighbourhood $D$ of $1$, the map $F$ has the following Gauss decomposition 
\begin{equation}\label{decomp}
F  = L M \exp(H)
\end{equation}
where $L : D\to N_-$, $M: D\to N_+$, 
 $H = \sum_{i=1}^n b_i h_{\a_i} : D\to \fh$ and $\exp:\fh\to \ch$ is the exponential map to the Cartan subgroup. 
From $F(1)=Id$ in \er{get F}, we see clearly that $L(1)=Id$. 


Now we show that 
$L$ is holomorphic in $D$. By the second equation in \er{more}, we have
$$
\exp(-H)M^{-1}(L^{-1} L_{\bar z}) M\exp(H) +  \exp(-H)M^{-1} M_{\bar z} \exp(H) + H_{\bar z} = A^\theta.
$$
In view of \er{Att}, the components in $\fn_-=\oplus_{\a\in \Delta^+} \fg_{-\a}$ of the above equation give 
\begin{equation}\label{L hol}
L^{-1} L_{\bar z} = 0,
\end{equation}
and then the components in $\fh$ give
$$
b_{i,\bar z} = \tfrac{1}{2} U_{i, \bar z}, \quad 1\leq i\leq n.
$$
Thus we see that 
$
b_{i,z\bar z} = \tfrac{1}{2} U_{i, z\bar z}.
$
Taking the conjugate, we also have 
$
\bar b_{i,z\bar z} = \tfrac{1}{2} U_{i, z\bar z}
$
since $U_i$ is real. Therefore,
$$
(b_i + \bar b_i)_{z\bar z} = U_{i, z\bar z}.
$$
Hence we have, for $1\leq i\leq n$, 
\begin{equation}\label{bu}
b_i(z) + \bar b_i(z) = U_i(z) - p_i(z)
\end{equation}
for some real-valued harmonic function $p_i$ in $D$. 

By the first equation in \er{more}, we have 
\begin{equation}\label{not zero}
\exp(-H)M^{-1}(L^{-1} L_{z}) M\exp(H) +  \exp(-H)M^{-1} M_z \exp(H) + H_{z} = A.
\end{equation}
Since $A\in \fg_{-1} \oplus \fh$ by \er{Aaa}, we see that $L^{-1} L_z\in \fg_{-1}$. We denote it by $\eta$ and write it out in terms of the basis
\begin{equation}\label{Lcomb}
L^{-1} (z)L_z(z) = \eta(z) = \sum_{i=1}^n f_i(z) e_{-\a_i}.
\end{equation}
Then the $f_i$s are holomorphic by \er{L hol}. Furthermore, by \er{Aaa} the component of $A$ in $\fg_{-1}$ is $\sum_{i=1}^n \exp\Big({\tfrac{1}{2} \sum_{j=1}^n a_{ij} U_j}\Big) e_{-\a_i}$. 
We also see that 
\begin{multline*}
\text{the component of the LHS of }\er{not zero} \text{ in }\fg_{-1} = \exp(-H)(L^{-1}L_z)\exp(H) \\
= \exp(-H) \bigg(\sum_{i=1}^n f_i(z) e_{-\a_i}\bigg) \exp(H) = \sum_{i=1}^n f_i(z) e^{\a_i(H)} e_{-\a_i}.
\end{multline*}
Comparing the components in $\mathfrak{g}_{-1}$ from both sides of  \eqref{not zero} we see that the $f_i$ in \er{Lcomb} are nowhere zero in $D$. 
Thus we have shown  \er{the eta}.

Now following the physicists, we denote by $|i\ra$ a highest weight vector in the $i$th fundamental representation $V_i$ of $\mathfrak{g}$, and $\la i|$ a lowest weight vector in its dual right representation such that 
$\la i | Id | i\ra = 1$.
The action of $\mathfrak{g}$ on $V_i$ is 
\begin{align}
X|i\rangle &= 0, \ \ \mbox{if} \ X \in \mathfrak{n}_+, \label{3.17} \\
 h_{\alpha_j}|i\rangle &= \delta_{ij}. \notag
\end{align}


From \er{decomp}, we have
$$
L = F \exp(-H) M^{-1}.
$$
Therefore using the ${}^*$ operation in \er{dual}, we have 
\begin{multline}\label{nice}
\la i | L^* L | i\ra = \la i | (M^{-1})^* \exp(-\bar H) F^* F \exp(-H) M^{-1}| i \ra\\
 = \Big\la i \Big| \exp\Big(-\sum_{j=1}^m (b_j + \bar b_j) h_{\a_j}\Big) \Big| i \Big\ra = e^{-(b_i + \bar b_i)},
\end{multline}
where we have used the following facts. 
First, by $F\in K$ we have $F^* F = Id$. 
Secondly, since $M^{-1}\in N_+$ by \eqref{3.17} we have $M^{-1}|i\ra = |i\ra$.  
Similarly, $(M^{-1})^*\in N_-$ and $\la i |$ is a lowest weight vector, so $\la i | (M^{-1})^* = \la i |$.

Eq. \er{nice} actually shows that $\la i | L^* L | i\ra$ is real.
Therefore by \er{bu}, 
\begin{equation}\label{form}
U_i = -\log \la i | L^* L | i \ra + p_i.
\end{equation}

Now we show that for the above $U_i$ to satisfy \er{toda2} with $L$ from \er{Lcomb}, we must have 
$$
p_i = \sum_{j=1}^n a^{ij} \log |f_j(z)|^2.
$$
This follows from \cite{LSbook}*{\S 4.1.2} using the so-called Jacobi identity from \cite{LSbook}*{\S 1.6.4}.
The identity says that for a general element $g\in G^s$, the simply-connected Lie group with Lie algebra $\fg$,
we have 
\begin{equation}\label{Jacobi}
\la i | g | i \ra \la i | e_{\a_i} g e_{-\a_i} | i \ra - \la i | g e_{-\a_i} | i \ra \la i | e_{\a_i} g | i \ra = \prod_{j\neq i} \la j | g | j \ra^{-a_{ij}}.
\end{equation}

From \er{form} and that $p_i$ is harmonic, we have 
\begin{equation}\label{quot rule}
U_{i, z\bar z} = - \frac{\la i | L^* L | i\ra\la i | L^* L | i\ra_{z\bar z} - \la i | L^* L | i\ra_z\la i | L^* L | i\ra_{\bar z}}{\la i | L^* L | i\ra^2}.
\end{equation}
Now by \er{L hol} and \er{Lcomb}, we have
$$
\la i | L^* L | i\ra_z = \la i | L^* L\eta | i\ra = f_i(z) \la i | L^* L e_{-\a_i} | i\ra,
$$
where we have used that for the $i$th fundamental representation we have $e_{-\a_j} | i \ra =0$ for $j\neq i$ (see Eq. \er{spell action}). 
Taking the ${}^*$ operation and noting \er{dual}, \er{Lcomb} also gives
\begin{equation*}\label{L*}
(L^*)_{\bar z} (L^*)^{-1} = \eta^* = \sum_{i=1}^n \ol{f_i(z)} e_{\a_i}.
\end{equation*}
Therefore, similarly we have 
$$
\la i | L^* L | i\ra_{\bar z} = \la i | \eta^* L^* L | i\ra = \ol{f_i(z)} \la i | e_{\a_i}L^* L | i\ra.
$$
Furthermore, we have 
$$
\la i | L^* L | i\ra_{z\bar z} = |{f_i}|^2 \la i | e_{\a_i}L^* L e_{-\a_i} | i\ra.
$$
Now applying the Jacobi identity \er{Jacobi} to \er{quot rule} with $g=L^* L$ and using $a_{ii} = 2$ we get
$$
U_{i, z\bar z} = - |f_i|^2 \prod_{j=1}^n \la j | L^* L | j\ra^{-a_{ij}}
$$
 By \er{form}, this is 
\begin{align*}
U_{i, z\bar z} &= - |f_i|^2 \exp\Big(\sum_{j=1}^n a_{ij} U_j - \sum_{j=1}^n a_{ij} p_j\Big) \\
&= - \exp\Big(\log |f_i|^2 - \sum_{j=1}^n a_{ij} p_j\Big) \exp\Big({\sum_{j=1}^n a_{ij} U_j}\Big).
\end{align*}
Therefore for the $U_i$ to satisfy \er{toda2}, we need $\log |f_i|^2 - \sum_{j=1}^n a_{ij} p_j=0$. This gives 
$$
p_i = \sum_{j=1}^n a^{ij} \log |f_j|^2
$$
and proves the formula \er{loc sol}. 
\end{proof}
\begin{remark}
A holomorphic map $\eta  : D\to \fg_{-1}$ as in \er{the eta} in a simply-connected domain $D$
also gives rise to a solutions $U= (U_1, U_2, \cdots, U_n)$ to the Toda system \eqref{toda2} in $D$.  

Indeed, if $ \eta(z)= \sum_{i=1}^n f_i(z) e_{-\a_i}, z\in D,$ with $f_i(z) \neq 0$ in $D$ for all $1\leq i\leq n,$
we can construct  a $L: D\to G$ by solving the ODE: 
$
L^{-1}(z)L_z(z) = \eta(z), \ z \in D.
$
Here $L(z_0)$ need not have to satisfy $L(z_0) = Id.$
Now construct  $U_i$ as in \er{loc sol}. Then we can easily check that $U_i, 1\leq i\leq n,$ satisfies the Toda system \er{toda2} in the same way as above, where again the important point is the Jacobi identity \er{Jacobi}.
\end{remark}
\begin{corollary} \label{realana}
The components $U_i$ in any solution $U=(U_1,\dots,U_n)$ of the Toda system \eqref{toda2} are real analytic in $\mathbb{C}^*$.
\end{corollary}
\begin{proof}
Since the $f_i$ and $L$ in \eqref{loc sol} are holomorphic and the $f_i$ are nowhere zero in a neighbourhood of $z_0,$ it follows that the RHS
of \eqref{loc sol} is real analytic in a neighbourhood of $z_0 \in \mathbb{C}^*.$ Therefore for any $z_0 \in \mathbb{C}^{*},$ by Theorem \ref{Taiwan}, $U_i$ is real analytic in a neighbourhood of $z_0.$
This proves the corollary.
\end{proof}

From now on we always assume $D$ in Theorem \ref{Taiwan} is a neighbourhood of $1$
in $\mathbb{C}\backslash \mathbb{R}_{\leq 0}.$

\section{$W$-Invariants of the Toda systems}

In this section, we will introduce $W$-invariants of Toda systems. Then we present a result relating the $W$-invariants with the local solutions \eqref{loc sol} from the last section. 

By definition, a $W$-invariant (also called a characteristic integral)
	for the Toda system \er{toda2} 
	is a polynomial in the $\p_z^k U_i$ for $k\geq 1$ and $1\leq i\leq n$ whose derivative with respect to $\bar z$ is
	zero if $U=(U_1,\dots,U_n)$ is a solution.

For the Liouville equation 
 the $W$-invariant is
	\begin{equation}\label{lw} 
	W = U_{zz} - U_z^2,
	\end{equation}
	and it is meromorphic since $W_{\bar z}=0$ for a solution $U$. 
	Furthermore, plugging in the local solution $U(z) = \log \left[|f^{\prime}(z)|/(1 + |f(z)|^2)\right]$, where $f$ is  holomorphic and the derivative $f'$ is nowhere zero,  
we have 
	\begin{equation}\label{Schw}
	W = \frac{1}{2} \bigg(\frac{f'''}{f'} - \frac{3}{2}\Big(\frac{{f''}}{{f'}}\Big)^2\bigg),
	\end{equation}
	that is, the $W$-invariant of the local solution becomes one half of the Schwarzian derivative of the developing map $f$ of the solution $U.$ 
	We aim to generalize such results to general Toda systems in this section. 
	
	For a general Toda system associated to a simple Lie algebra of rank $n$, there are $n$ basic $W$-invariants (see \cite{FF}) so that the other $W$-invariants are differential polynomials in these. For Lie algebras of type of $A_n, B_n, C_n$ and $G_2$, more concrete expressions of W-invariant similar to \eqref{lw} have been obtained in \cite{LWY} and \cite{jnmp}. In this section following
\cite{jnmp}, we will obtain the $W$-invariants for a general simple Lie algebra by applying the Drinfeld-Sokolov \cite{DS} gauge  directly to the solution $U_i.$


Let $\phi(z) = (\phi_1(z), \phi_2(z), \cdots, \phi_n(z))$ be a smooth (not necessarily holomorphic) function defined on a domain $\Omega \subset \mathbb{C}.$ A  differential polynomial in $\phi_i$  is a polynomial in the $\phi_i, 1\leq i \leq n,$ and their derivatives $\partial^k_z \phi_i$ of finite orders.  We will denote a differential polynomial in the $\phi_i$ by $S([\phi_1], [\phi_2], \cdots, [\phi_n])$ or simply by $S(\phi).$
		
	To obtain the $W$-invariants, we conjugate \er{zero-curv} by $\exp\Big(\frac{1}{2}\sum_{i=1}^n U_i h_{\a_i}\Big)$ to arrive at  
	another zero-curvature representation of the Toda system \er{toda2}: 
	$$
	\Big[\p_z + \epsilon - \sum_{i=1}^n U_{i, z} h_{\a_i}, \p_{\bar z} - \sum_{i=1}^n e^{u_i} e_{\a_i}\Big] = 0,
	$$
	where $\e=\sum_{i=1}^n e_{-\a_i}\in \fg_{-1}$ and $u_i = \sum_{j=1}^n a_{ij} U_j$. 
	(The current  zero-curvature equation has different signs from the version in \cite{jnmp}, but all results there continue to hold for the current version.)
	
	Let $\fs$ be a Kostant slice of $\fg$, that is, a homogeneous subspace $\fs$ with respect to the principal grading \er{grading} such that 
	\begin{equation}\label{Kos s}
	\fg = [\epsilon, \fg] \oplus \fs.
	\end{equation}
	Then it is known \cite{K2} that $\fs\subset \fn_+$ in \er{nn} and $\dim \fs=n$. 
	Let $\{s_j\}_{j=1}^n$ be a homogeneous basis of $\fs$ ordered with nondecreasing gradings from \er{grading}. 	
	
	We first state the following lemma about the existence and uniqueness of the so-called Drinfeld-Sokolov gauge. 
	\begin{lemma}\label{DS lemma} For $1\leq i\leq n$, let $\phi_i$ be smooth functions on a domain $\Omega \subset \bc$. 
	\begin{enumerate}
	\item There exists a unique map $\widetilde M:\Omega \to N_+$ and smooth functions $W_j(\phi), 1\leq j \leq n,$ such that 
	\begin{equation}\label{gen DS}
		\widetilde M\Big( \p_z + \epsilon - \sum_{i=1}^n \phi_i\, h_{\a_i} \Big) \widetilde M^{-1} = \p_z + \e + 
\sum_{j=1}^n W_j(\phi)s_j.
	\end{equation}
Furthermore, there is a differential polynomial $S_j$ depending only on the slice basis of $\mathfrak{g}$ such that 
\begin{equation*}
W_j(\phi) = S_j([\phi_1], [\phi_2], \cdots, [\phi_n]) = S_j(\phi).
\end{equation*}
	\item If the $\phi_i$ are holomorphic in $\Omega,$ then so is $\widetilde M.$

	\item If  $\phi_i = U_{i,z}$ where $U_i$ is a solution of \eqref{toda2} in $\Omega,$ then 
\begin{equation}\label{use Uz}
W_j = S_j([U_{1,z}], [U_{2,z}], \cdots , [U_{n,z}])
\end{equation}
is holomorphic.  
	\end{enumerate}
	\end{lemma}
	We refer the reader to \cite{DS}*{Proposition 6.1}  for a proof of (i) and to \cite{jnmp} for a proof
of (iii). Furthermore, (ii) follows directly from the construction for (i). We also refer to \cite{Feher2} for the application of the Drinfeld-Sokolov gauge to the $W$-algebras.

	
	From now on, we fix a solution $U= (U_1,U_2, \cdots, U_n)$ of \eqref{toda2}, and let $W_j$ be given by \eqref{use Uz} for 
$z \in \mathbb{C}^*.$ The set of $W_j$ is called the $W$-invariants of the solution $U.$	
	Here is the main result in this section which relates the $W$-invariants with the holomorphic functions $f_i$ in $\eta$ from \er{the eta} for local solutions. 
	
\begin{theorem}\label{two-invs}
Let $U$ be a solution of \eqref{toda2} and the holomorphic data of $U(z), z \in D,$ be given as in  \eqref{the eta}.
Then there exists a unique map $M_1: \widetilde D \to N_+$, defined in a subdomain $\widetilde D\subset D$, such that 
	\begin{equation}\label{Peta}
	M_1 \Big(\p_z + \epsilon - \sum_{i=1}^n F_i h_{\a_i} \Big) M_1^{-1} = \p_z + \e + \sum_{j=1}^n W_j s_j \ \ z \in \widetilde D,
	\end{equation}
	where 
	\begin{equation}\label{def f}
	F_i = \sum_{j=1}^n a^{ij}\p_z \log f_j = \sum_{j=1}^n a^{ij} \frac{f_j'}{f_j}.
	\end{equation}

\end{theorem}

\begin{proof} Our proof follows the general approach in \cite{Feher0, Feher1, Feher2} of treating Toda theories as conformally reduced WZNW theories. For that purpose,  
we choose
\begin{equation}\label{choose Q}
Q_1 = \exp\Big( -\sum_{k=1}^n \big(\sum_{j=1}^n a^{kj} \log f_j\big) h_{\a_k} \Big): D\to \ch,
\end{equation}
where $\ch$ is the Cartan subgroup of $G$. 
Note that a single-valued branch of $\log f_j$ can be chosen since $D$ is simply-connected. 
Since $[h_{\a_k}, e_{-\a_i}] = -a_{ik} e_{-\a_i}$ by \er{cartan}, 
we have
\begin{equation} \label{compQ1}
\begin{split}
Q_1^{-1}e_{-\a_i}Q_1 &= \exp\Big( - \sum_{k=1}^n \big(\sum_{j=1}^n a^{kj} \log f_j\big) a_{ik} \Big) e_{-\a_i}  \\
&= \exp\big( - \log f_i \big) e_{-\a_i}= \frac{1}{f_i} e_{-\a_i}.
\end{split}
\end{equation}
It is also clear that 
\begin{equation} \label{compQ2}
Q_1^{-1}\p_z Q_1 = -\sum_{i=1}^n\big(\sum_{j=1}^n a^{ij} \p_z \log f_j\big) h_{\a_i} = -\sum_{i=1}^n F_i h_{\a_i}.
\end{equation}
Define 
\begin{equation*}
\Psi = Q_1^* L^* L Q_1.
\end{equation*}
Then by 
\er{the eta}, and the above, 
\begin{equation}\label{J curr}
\Psi^{-1} \Psi_z = (LQ_1)^{-1}(LQ_1)_z = Q_1^{-1} L^{-1} L_z Q_1 + Q_1^{-1}\p_z Q_1 = \epsilon -\sum_{i=1}^n F_i h_{\a_i}. 
\end{equation}
This quantity $\Psi^{-1} \Psi_z $ is called the current of $\Psi$ and is denoted by $J$ in the works \cites{Feher0, Feher1, Feher2}, and 
the requirement that its component in $\fn_-$ is $\epsilon$ is the key idea in these works. 

Furthermore, since $Q_1|i\ra = \exp(-\sum_{j=1}^n a^{ij}\log f_j) |i\ra$, the local solutions in \er{loc sol} have the following neat form
\begin{equation}\label{Feher-neater}
U_i = -\log\la i| Q_1^* L^* L Q_1|i\ra = -\log \la i | \Psi | i \ra.
\end{equation}

Now $\Psi$ has Gauss decomposition \er{Gdec} in a domain $\widetilde D \subset D$: 
\begin{equation}\label{new Gauss}
\Psi = \Psi_- \Psi_0 \Psi_+,\quad \Psi_{\pm} \in N_{\pm},  \Psi_0\in \ch.
\end{equation} 
Then clearly
$U_i = -\log \la i | \Psi_0 | i \ra$ in $\widetilde D.$ 
Therefore, we see that 
\begin{equation}\label{psi0}
\Psi_0 = \exp\Big(-\sum_{i=1}^n U_i h_{\a_i}\Big), \quad \Psi_0^{-1} \Psi_{0, z} = - \sum_{i=1}^n U_{i, z} h_{\a_i}.
\end{equation}

Plugging \er{new Gauss} in \er{J curr}, we get 
\begin{equation}\label{plugin}
\Psi_+^{-1}\Psi_{+, z} +  \Psi_+^{-1}\Psi_0^{-1}\Psi_{0, z} \Psi_{+} +  \Psi_+^{-1}\Psi_0^{-1}\Psi_-^{-1}\Psi_{-, z}\Psi_{0} \Psi_{+} = \epsilon -\sum_{i=1}^n F_i h_{\a_i}\ \ \mbox{in} \ \widetilde D.
\end{equation}
Comparing the components of the above equality in $\fn_-$, we see that
$$
\Psi_0^{-1}\Psi_-^{-1}\Psi_{-, z}\Psi_{0} = \epsilon.
$$
Also in view of \er{psi0}, Eq. \er{plugin} becomes
$$\Psi_+^{-1}\Psi_{+, z} +  \Psi_+^{-1}\Big(\epsilon - \sum_{i=1}^n U_{i, z}h_{\a_i}\Big) \Psi_{+} = \epsilon -\sum_{i=1}^n F_i h_{\a_i} \ \ \mbox{in} \ \widetilde D.$$
We owe this important equality to \cite{Feher2}*{Eq. (2.14a)}. 
Therefore, 
\begin{equation}\label{direct gauging}
\Psi_+ \Big(\p_z + \epsilon -\sum_{i=1}^n F_i h_{\a_i}\Big) \Psi_+^{-1} = \p_z + \epsilon - \sum_{i=1}^n U_{i, z}h_{\a_i} \ \ \mbox{in} \ \widetilde D.
\end{equation}

By Lemma \ref{DS lemma}, we see that \er{Peta} holds with $M_1 = \widetilde M_0 \Psi_+,$ where $\widetilde M_0 = \widetilde M|_{\widetilde D},\widetilde M : \mathbb{C} \backslash \{0\} \rightarrow N_{+}$ is the unique map satisfying \eqref{gen DS} with $\phi_i = U_{i,z}.$ Since the $F_i$ are holomorphic, we see that $M_1$ is holomorphic by Lemma \ref{DS lemma} (ii) although neither $\Psi_+$ nor $M_0$ is holomorphic. 
\end{proof}

\section{Applying the finite integral condition to $W$-invariants}

In this section, we adapt the analytical estimates from \cites{BM, LWY} to determine the simple forms of the $W$-invariants of our solutions to \eqref{toda2}. This explicit form is crucial to derive Theorem \ref{main} in the next section.

We define the weight of $\partial_z^kU_i$ to be $k.$
For a differential monomial in the $U_i$, we call by its \emph{degree} the sum of the weights multiplied by the algebraic degrees of the corresponding factors. 
	For example the above $W = U_{zz} - U_{z}^2$ in \er{lw} for the Liouville equation has a homogeneous degree $2$. 
	
	
\begin{proposition} The $W$-invariants for the Toda system \er{toda2} are 
	\begin{equation}\label{the W}
	W_j = \frac{w_j}{z^{d_j}}, \quad z\in \bc^*, \ 1\leq j\leq n, 
	\end{equation}
where the $d_j$ are the homogeneous degrees of the differential polynomials $S_j$ and the $w_j$ are constants. 
\end{proposition}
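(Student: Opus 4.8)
The plan is to pin down the holomorphic functions $W_j$ by combining their behaviour at the two ends of the punctured plane. Away from the origin the $U_i$ solve the source-free Toda system, so the theory of Section 3 applies and each $W_j = S_j([U_{1,z}],\dots,[U_{n,z}])$ is a well-defined holomorphic function on $\bc^*$ (its $\bar z$-derivative vanishes by $W$-invariance). The assertion of the proposition is therefore that $z^{d_j} W_j$ is constant, and I will obtain this from a Liouville-type argument once I control $W_j$ near $0$ and near $\infty$. It is essential here that, as recalled above, the $W_j$ involve only the derivatives $\p_z^k U_i$ with $k\ge 1$ and are homogeneous of degree $d_j$: every monomial is a product $\prod_l \p_z^{k_l} U_{i_l}$ with $\sum_l k_l = d_j$ and no undifferentiated $U_i$, which is what makes the following estimates clean (an undifferentiated $U_i$ would blow up logarithmically).

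The key analytic input is a pair of pointwise derivative estimates
$$|\p_z^k U_i(z)| \le C_k\,|z|^{-k}, \quad k\ge 1,$$
valid for $|z|$ small and for $|z|$ large. Near the origin this reflects the strength of the singular source: from \er{U and u} the leading behaviour is $U_i = 2\gamma^i \log|z| + O(1)$, so that $\p_z U_i = \gamma^i/z + (\text{lower order})$, and differentiating gives $\p_z^k U_i = O(|z|^{-k})$. Near infinity the same bound must be extracted from the finite-integral conditions: writing $U_i$ as $2\gamma^i\log|z|$ plus a logarithmic potential of $e^{u_i}$ plus a harmonic term, the finiteness of $\int_{\br^2} e^{u_i}$ forces $U_i = 2\beta^i\log|z|+O(1)$ and $\p_z U_i = O(|z|^{-1})$ at infinity for some constants $\beta^i$. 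In both regimes the higher-order bounds follow by a rescaling argument: on the annulus $\{|z|\sim r\}$ one rescales to unit size, applies interior elliptic (Schauder) estimates to the rescaled equation, and transfers the bounds back, picking up the factor $r^{-k}$.

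Granting these estimates the conclusion is immediate. Since $W_j$ is homogeneous of degree $d_j$, the derivative bounds give
$$|W_j(z)| \le C\,|z|^{-d_j}$$
for $|z|$ small and for $|z|$ large. Consequently $z^{d_j} W_j$ is holomorphic and bounded near $0$, so the singularity is removable and $z^{d_j} W_j$ extends to an entire function; being bounded at infinity, it is bounded on all of $\bc$ and hence constant by Liouville's theorem. Calling this constant $w_j$ yields \er{the W}.

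I expect the main obstacle to be the decay estimate at infinity: converting the integral bound $\int_{\br^2} e^{u_i}<\infty$ into the pointwise control $\p_z U_i = O(|z|^{-1})$ and its higher-derivative consequences is exactly the delicate analytic point, and it is here that the Brezis--Merle type estimates of \cite{BM} and the arguments of \cite{LWY} must be adapted to the present Lie-theoretic setting. By contrast, the behaviour near the origin is governed transparently by the source strength $\gamma^i$, and the final Liouville step is routine.
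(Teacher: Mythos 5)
Your proposal is correct and follows essentially the same route as the paper: establish $\p_z^k U_i = O(|z|^{-k})$ both near $0$ and near $\infty$ (via Brezis--Merle together with the explicit $2\gamma^i\log|z|$ part of the singularity), then use the homogeneity of degree $d_j$ and the holomorphy $W_{j,\bar z}=0$ to conclude that $z^{d_j}W_j$ is bounded holomorphic on $\bc^*$ and hence constant by Liouville. The only cosmetic difference is that the paper obtains the higher-derivative bounds by differentiating the integral representation of $V_i = U_i - 2\gamma^i\log|z|$, whereas you propose rescaled interior Schauder estimates; both are standard and interchangeable here.
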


\begin{proof} This proof is an adaption of the proof in \cite{LWY} of the corresponding assertion in their Eq. (5.9). 

	Following \cite{LWY}*{Eq. (5.10)}, introduce 
\begin{equation}\label{def V}
V_i = U_i - 2\gamma^i \log |z|, \quad 1\leq i\leq n.
\end{equation}
Then system \er{toda2} becomes
$$
\begin{cases}
\displaystyle\Delta V_i = -4 |z|^{2\gamma_i} \exp\Big(\sum_{j=1}^n a_{ij} V_j\Big),\\
\displaystyle \int_{\br^2} |z|^{2\gamma_i} \exp\Big(\sum_{j=1}^n a_{ij} V_j\Big) \, dx <\infty.
\end{cases}
$$
As $\gamma_i > -1$, applying Brezis-Merle's argument in \cite{BM}, we have that $V_i \in C^{0, \alpha}$ on $\bc$ for some $\alpha \in (0, 1)$ and that they are upper bounded over $\bc$. 
Therefore we can express $V_i$ by the integral representation formula, and we have for $k\geq 1$ 
\begin{equation}
\label{order of V}
\begin{split}
\p_z^k V_i(z) &= O(1+|z|^{2+2\gamma_i -k}) \quad\, \text{ near } 0,\\
\p_z^k V_i(z) &= O(|z|^{-k}) \quad\quad\quad\qquad\text{ near } \infty.
\end{split}
\end{equation}

Therefore from \er{def V}, we have for $k\geq 1$
\begin{equation}
\label{order of U}
\begin{split}
\p_z^k U_i(z) &= O(|z|^{-k}) \quad\text{ near } 0,\\
\p_z^k U_i(z) &= O(|z|^{-k}) \quad\text{ near } \infty.
\end{split}
\end{equation}
	By $W_{j, \bar z} = 0$ and that $W_j$ has degree $d_j$, we see from the above estimates that $z^{d_j} W_j$ is holomorphic and bounded on $\bc^*$. 
	Therefore $z^{d_j} W_j = w_j$ is a constant by the Liouville theorem, and so \er{the W} holds. 
\end{proof}
\begin{remark}
It is known from \cite{FF} and also clear from \er{use Uz} that the homogeneous degree $d_j$ of $S_j$ is the same as the degree of the corresponding primitive adjoint-invariant function
	of the Lie algebra $\fg$ \cite{K1}. We call such degrees the degrees of the simple Lie algebra and we have listed them in Appendix subsection \ref{sub-degree}. 
\end{remark}
\begin{theorem}\label{prop-use-zg} The $W$-invariants $W_j$ for the Toda system \er{toda2} are also computed by 
\begin{equation}\label{use zg}
M_2\Big( \p_z + \e - \sum_{i=1}^n \frac{\gamma^i}{z} h_{\a_i} \Big) M_2^{-1} = \p_z + \e + \sum_{j=1}^n W_j s_j,
\end{equation}
where $M_2: \bc^*
\to N_+$ is unique and holomorphic. 
\end{theorem}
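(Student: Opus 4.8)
The plan is to read off $M_2$ and its invariants directly from Lemma~\ref{DS lemma} applied to the holomorphic functions $\phi_i = \gamma^i/z$ on the domain $\bc^*$, and then to match the invariants so produced with the $W_j$ of the solution. By Lemma~\ref{DS lemma}(1) there is a unique $M_2 : \bc^* \to N_+$ bringing $\p_z + \e - \sum_{i=1}^n \frac{\gamma^i}{z} h_{\a_i}$ into Drinfeld--Sokolov gauge, by part (3) it is holomorphic since the $\gamma^i/z$ are, and by part (2) the resulting invariants are $\wt W_j = S_j([\gamma^1/z], \dots, [\gamma^n/z])$ for the universal differential polynomials $S_j$ of \er{def Sj}. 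Hence \er{use zg} is equivalent to the pointwise identity $\wt W_j = W_j$ for $1 \le j \le n$, and this is what I would establish.

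To compare the two, recall from \er{use Uz} that $W_j = S_j([U_{1,z}], \dots, [U_{n,z}])$. Writing $U_i = 2\gamma^i \log|z| + V_i$ as in \er{def V}, one has for each $k \ge 1$
\begin{equation*}
\p_z^k U_i = \frac{\gamma^i (-1)^{k-1}(k-1)!}{z^k} + \p_z^k V_i,
\end{equation*}
whose first term is precisely $\p_z^k$ of the model $2\gamma^i \log|z|$; in particular its $\p_z$ equals $\gamma^i/z$, so this model is exactly the solution whose attached connection appears in \er{use zg}. The estimate \er{order of V} gives $\p_z^k V_i = O(1 + |z|^{2 + 2\gamma_i - k})$ near $0$, which because $\gamma_i > -1$ is $o(|z|^{-k})$; thus every $\p_z^k V_i$ is of strictly lower order near $0$ than the leading singular term $\sim z^{-k}$, and this holds uniformly over all $k$ that occur in the $S_j$.

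Now I would use homogeneity. Each $S_j$ is homogeneous of degree $d_j$ in the $\p_z^k U_i$, so substituting only the singular part $\p_z^k U_i \to \gamma^i(-1)^{k-1}(k-1)!/z^k$ turns every monomial into a constant multiple of $z^{-d_j}$; that is, $\wt W_j = S_j([\gamma^1/z], \dots) = c_j / z^{d_j}$ identically on $\bc^*$ for some constant $c_j$. Substituting instead the full $\p_z^k U_i$ and expanding, any monomial containing at least one factor $\p_z^k V_i$ is $o(z^{-d_j})$ near $0$ by the previous paragraph, so $W_j = c_j/z^{d_j} + o(z^{-d_j})$ near $0$. But the Proposition supplies, through \er{the W}, that $W_j = w_j/z^{d_j}$ exactly. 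Since both sides are pure powers $z^{-d_j}$, matching the leading coefficient forces $w_j = c_j$ and hence $W_j = \wt W_j$ as holomorphic functions on $\bc^*$. Together with the first paragraph this proves \er{use zg} with $M_2$ unique and holomorphic.

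The delicate point is the upgrade from a near-$0$ asymptotic agreement to an exact identity of functions; a priori the genuine invariant could differ from its model value by subleading terms. This is exactly where two rigidities combine: the homogeneity of $S_j$ makes the pure $1/z$ substitution produce an exact monomial $c_j/z^{d_j}$ with no tail, while \er{the W} guarantees that $W_j$ is likewise an exact monomial. Agreement of the single leading coefficient then propagates to agreement everywhere. The only quantitative input needed is the simultaneous bound $\p_z^k V_i = o(|z|^{-k})$ for all relevant $k$, which is the uniform content of \er{order of V} and rests on the hypothesis $\gamma_i > -1$.
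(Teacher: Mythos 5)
Your proposal is correct and follows essentially the same route as the paper: decompose $U_{i,z}=\gamma^i/z+V_{i,z}$ via \eqref{def V}, invoke Lemma \ref{DS lemma} for existence, uniqueness and holomorphy of $M_2$, and use the pole orders from \eqref{order of V} together with the exact monomial form \eqref{the W} to conclude $W_j=S_j([\gamma^1/z],\dots,[\gamma^n/z])$. The only difference is one of exposition: you spell out the homogeneity argument and the passage from leading-order agreement at $0$ to an exact identity, a step the paper compresses into the remark that the terms involving $\p_z^k V_i$ ``will not appear in the final form of $W_j$ since their orders of pole are not high enough.''
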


\begin{proof} 	
	By \er{def V}, 
	$$
	U_{i, z} = V_{i, z} + \p_z (2\g^i\log|z|) = V_{i, z} + \frac{\g^i}{z}.
	$$
	From \er{the W}, $W_j=\frac{w_j}{z^{d_j}}$. 
	Since $\gamma_i> -1$, by \er{order of V} and \er{order of U} for the orders at 0, we see that all the terms involving $\p_z^k V_i$ will 
	not appear in the final form of $W_j$ since their orders of pole are not high enough. Therefore in terms of \er{use Uz}, we have
	\begin{equation}\label{rid}
	W_j = S_j([U_{1, z}], \cdots, [U_{n, z}]) = S_j\Big(\big[\frac{\g^1}{z}\big], \cdots, \big[\frac{\g^n}{z}\big]\Big), \quad 1\leq j\leq n,
	\end{equation}
	where we recall that the brackets indicate that the $S_j$ are differential polynomials and depend on the derivatives of the arguments. 
	
	By Lemma \ref{DS lemma}, there exists a unique holomorphic $M_2: \bc^*\to N_+$ such that 
	\begin{equation*}
	M_2\Big( \p_z + \epsilon - \sum_{i=1}^n \frac{\gamma^i}{z} h_{\a_i} \Big) M_2^{-1} = \p_z + \e + \sum_{j=1}^n \wt{W_j} s_j,
	\end{equation*}
	where
	$$
	\wt{W_j} = S_j\Big(\big[\frac{\g^1}{z}\big], \cdots, \big[\frac{\g^n}{z}\big]\Big) = W_j, \quad 1\leq j\leq n,
	$$
	by \er{rid}. 
	\end{proof}

\section{The holomorphic functions in the local solutions}

The $W$-invariants play essential roles in our approach of classifying the solutions. The work \cite{LWY} classified the solutions to the Toda systems of type $A_n$ by relating them to an ODE whose coefficients are the $W$-invariants. 
In this section, we will use the $W$-invariants to largely restrict the holomorphic functions $f_i(z)$ in the local solutions \er{loc sol} to be $f_i(z) = z^{\gamma_i}$ as long as we allow some constant group element. 

\begin{theorem} Let $U_i$ be the solution of \eqref{toda2}, then
\begin{equation}\label{use g}
U_i(z) = -\log \la i | \Phi^* g^* g \Phi | i \ra + {2\gamma^i}\log |z| , \quad 1\leq i\leq n, \ \ z \in \mathbb{C} \backslash \mathbb{R}_{\leq 0}.
\end{equation}
where $\Phi$ satisfies \er{develop0} and $g\in G$ is a constant group element. 

\end{theorem}

\begin{proof} 
Set
\begin{equation}\label{def q2}
Q_2 = \exp\Big( -\sum_{k=1}^n ({\gamma^k}\log z) h_{\a_k} \Big): \bc\backslash \br_{\leq 0}\to  \ch.
\end{equation}
Then similarly to \er{compQ1} and \er{compQ2}, we get using \er{develop0} 
\begin{equation}\label{phiq2}
(\Phi Q_2)^{-1} (\Phi Q_2)_z = Q_2^{-1} \Big(\sum_{i=1}^n z^{\gamma_i} e_{-\a_i}\Big) Q_2 + Q_2^{-1} Q_{2, z} = \e - \sum_{i=1}^n \frac{\gamma^i}{z} h_{\a_i}.
\end{equation}

Let $M= M_2^{-1} M_1 : D\to  N_+$, where $M_1$ and $M_2$ are from Theorems \ref{two-invs} and \ref{prop-use-zg}. 
Then 
$$
M\Big(\p_z + \e - \sum_{i=1}^n F_i h_{\a_i}\Big) M^{-1} = \p_z + \e - \sum_{i=1}^n \frac{\g^i}{z} h_{\a_i},
$$
which is
$$
\e - \sum_{i=1}^n F_i h_{\a_i}  =M^{-1}\Big(\e - \sum_{i=1}^n \frac{\g^i}{z} h_{\a_i}\Big) M + M^{-1} M_z .
$$
This and \er{phiq2} imply that 
\begin{align*}
(\Phi Q_2 M)^{-1} (\Phi Q_2 M)_z &= M^{-1} \Big(\e - \sum_{i=1}^n \frac{\gamma^i}{z} h_{\a_i}\Big) M + M^{-1} M_z \\
&= \e - \sum_{i=1}^n F_i h_{\a_i}
= (LQ_1)^{-1}(LQ_1)_z,
\end{align*}
by \er{J curr}. Thus $[(\Phi Q_2 M)(LQ_1)^{-1}]_z = 0.$ Since both $\Phi Q_2 M$ and $LQ_1$ are holomorphic,  we have
\begin{equation}\label{important}
LQ_1 = g\Phi Q_2 M,
\end{equation}
where $g\in G$ is a constant element. 

Therefore by \er{Feher-neater}, \er{important} and \er{def q2}, the solution $U_i$ of \er{toda2} becomes 
\begin{equation} \label{locexp}
\begin{split}
U_i(z) 	&= -\log \la i | Q_1^* L^* L Q_1 | i \ra 
	= -\log \la i | M^* Q_2^* \Phi^* g^* g \Phi Q_2 M | i \ra \\
	&= -\log \la i | \Phi^* g^* g \Phi | i \ra + 2\gamma^i \log|z|, \ \ z \in D.
\end{split}
\end{equation}
By Corollary \ref{realana}, $U_i$ is real analytic. Since both sides of \eqref{locexp} are real analytic functions defined on  $\mathbb{C} \backslash \mathbb{R}_{\leq 0},$ \eqref{locexp} holds in  $\mathbb{C} \backslash \mathbb{R}_{\leq 0}.$
\end{proof}


\noindent{\bf Proof of Theorem \ref{main}.}
\begin{proof}
We write the Iwasawa decomposition \er{Iwasawa} for the constant element $g\in G$ as 
$$
g = F\Lambda C
$$
with $F\in K$, $\Lambda \in A$ and $C\in N$. Then using  $F^*F=Id$ and $\Lambda^* = \Lambda$ the solution \er{use g} becomes \er{Ui}.  This completes the proof.
\end{proof}

\appendix

\section{Background on Lie algebras and representation theory}  
\subsection{Iwasawa decomposition} \label{Iwasawasec}
Let ${}^* :\fg \to \fg$ be the conjugate-linear transformation defined by 
\begin{equation}\label{dual} 
{}^*|_{\fh_0} = \text{Id}|_{\fh_0}, 
\quad e_\a^* = e_{-\a},\ \a\in \Delta.
\end{equation}

Let $G$ be a connected complex Lie group whose Lie algebra is $\fg$. 
Then we have the following Iwasawa decomposition \cite{Knapp}*{Theorem 6.46}  
\begin{equation}\label{Iwasawa}
G = KAN,
\end{equation}
where $K$ is a maximal compact subgroup, $A$ is an abelian subgroup corresponding to $\fh_0$, and $N$ is the nilpotent subgroup corresponding to $\fn=\fn_-$ . 
The subgroups $A$ and $N$ are simply-connected. An element $F$ belongs to $K$ if and only if $F^* F = Id$, where $*$ is lifted from $\fg$ to $G$. 

The following Cartan involution $\theta: \mathfrak{g} \rightarrow \mathfrak{g}$ defined by 
\begin{equation}\label{def theta}
\theta(X) =X^{\theta}= -X^*, \quad X \in 
\mathfrak{g} 
\end{equation}
is very 
important, since it satisfies $\theta([X,Y]) = [\theta(X), \theta(Y)].$ Then $K$ is characterized as the fixed point set $G^{\theta}$
in $G$ under the lifted action of $\theta$ and its Lie algebra is $\mathfrak{k} = \mathfrak{g}^{\theta}$  (see \cite{Knapp}*{Theorem 6.31}).

\subsection{Basic representation theory} \label{basicrepresentation}
The integral weight lattice of $\fg$ is $\Lambda_W = \{\beta\in \fh_0'\,|\, \beta(h_{\a_i})\in \bz,\ \forall 1\leq i\leq n\}$. An integral weight $\beta$ is called dominant if $\beta(h_{\a_i})\geq 0$ for all $1\leq i\leq n$. The weight lattice is a lattice generated by the fundamental weights $\omega_i$ for $1\leq i\leq n$ satisfying
$$
\omega_i(h_{\a_j}) = \delta_{ij}.
$$

An irreducible representation $\rho$ of $\fg$ on a finite-dimensional complex vector space $V$ has the weight space decomposition $V = \oplus V_\beta$, where $\beta\in \Lambda_W$ and $V_\beta = \{ v\in V \,|\, \rho(H) (v) = \beta(H) v,\ \forall H\in \fh\}$. We have 
\begin{equation}\label{should list}
\rho(\fg_\a) V_\beta \subset V_{\a + \beta}.
\end{equation} 
A basic theorem states that if $\rho$ is an irreducible representation, then there exists a unique highest weight $\lambda$ with a one-dimensional highest weight space $V_\lambda$ such that $\rho(\fn_+) V_\lambda = 0$. All the weights of $V$ are of the form $\lambda - \sum_{i=1}^n m_i \a_i,$ where the $m_i$ are nonnegative integers. 

The Theorem of the Highest Weight \cite{Knapp}*{Theorem 5.5} asserts that up to equivalence, the irreducible finite-dimensional complex representations of $\fg$ stand in one-one correspondence with the dominant integral weights which sends an irreducible representation to its 
highest weight. We denote the irreducible representation space corresponding to a dominant weight $\lambda$ by $V^\lambda$. 

There is a canonical pairing between the dual space $V^*=\on{Hom}(V, \bc)$ and $V$ denoted by $\la w, v \ra \in \bc$ with $v\in V$ and $w\in V^*$. $V^*$ has a \emph{right representation} $\rho^*$ of $\fg$ defined by 
\begin{equation}\label{def right}
\la w \rho^*(X), v\ra = \la w, \rho(X)v\ra,\quad X\in \fg.
\end{equation}
 
The representation corresponding to the $i$th fundamental weight $\omega_i$ is called the $i$th fundamental representation  of $\fg$, which we denote by $V_i$. 
We choose a highest weight vector in $V_i$, and following the physicists \cite{LSbook} we called it by $|i\ra$. 
We choose a vector $\la i |$ in the lowest weight space in $V_i^*$ and require that $\la i | Id | i\ra = 1$ for the identity element $Id\in G$.  
For simplicity, we will omit the notation $\rho$ for the representation. 

We have (see \cite{LSbook}*{Eq. (1.4.19)})
\begin{equation}\label{spell action}
X|i\ra = 0, \ \forall X\in \fn_+;\quad h_{\a_j}|i\ra = \delta_{ij} |i\ra;\quad \text{and }e_{-\a_j}|i\ra = 0,\ \forall j\neq i.
\end{equation} 
That is, in the $i$th fundamental representation, only $e_{-\a_i}$ may bring the highest weight vector down. Similarly we have $\la i | Y = 0$ for $Y\in \fn_-$, and $\la i | e_{\a_j} = 0$ if $j\neq i$. 

Let $G^s$ be the simply connected Lie group with Lie algebra $\mathfrak{g}.$ Let $V$ be a finite dimensional, irreducible representation of $\mathfrak{g}.$ To lift the representation to $G^s,$ first we need to define the representation in a neighborhood of the identity. Since
in a neighborhood of the identity any element of $G^s$ can be uniquely written as $e^X$ for some $X \in \mathfrak{g},$ we can
simply define
\begin{align*}
e^X v = \exp(X)v, \ \ v \in V,
\end{align*}
where $\exp(X) = \sum_{k=0}^{\infty} \frac{X^k}{k !}.$ Then using the simply connectedness of $G^s,$ extend the map to any point of $G^s$
by using a path joining the identity and the point. Then one can show that the definition is independent of the choice of the path and the map defined this way is indeed a representation of $G^s.$ 



The universal enveloping algebra of $\fg$ is defined as 
$$U(\fg) = T(\fg)/J,$$ 
where $T(\fg)=\oplus_{k=0}^\infty T^k(\fg)$ is the tensor algebra, and $J$ is the two-sided ideal generated by all elements of the form $X\otimes Y - Y\otimes X - [X, Y]$ with $X$ and $Y$ in $\fg$. 
A representation of $\fg$ also leads to a representation for the universal enveloping algebra $U(\fg)$ \cite{Knapp}. Similarly, we can define the universal enveloping algebra of a subalgebra of $\fg$, for example $U(\fn_-)$ for $\fn_-$. 

For $\mu, \nu\in U(\fg)$ and $g \in G^s$, $\la i |\nu g \mu| i \ra$ denotes the pairing of $\la i |\nu$ in $V_i^*$ with $g(\mu |i\ra)$ in $V_i$.

In our main Theorem \ref{main}, we can work with a general Lie group $G$ whose Lie algebra is $\fg$ instead of only the simply-connected $G^s$. The reason is that 
the simply-connected compact subgroup $K^s$ of $G^s$ is used only in passing. 
Our results are expressed using $N$ and $A$, and they are simply-connected and the same for a general $G$ and for the simply-connected $G^s$. 

There is a more concrete realization of the dual $V_i^*$ in  \ref{def right}. By the unitary trick, there exists a Hermitian form $\{\cdot, \cdot\}$ on $V_i$ (conjugate linear in the second position) invariant under the compact group $K^s$ of a simply-connected $G^s$. 
The important property of this Hermitian form is that \cite{KosToda}*{Eq. (5.11)} 
\begin{equation}\label{adjoint}
\{ g u , v \} = \{u, g^* v\}, \quad g\in G^s, \ u, v\in V_i. 
\end{equation}


Choose $v^{\omega_i}\in V_i$ to be a highest weight vector for the $i$th fundamental representation, and we require that $\{v^{\omega_i}, v^{\omega_i}\}=1$. 
Then the term in \er{Ui} is, with $g=\Lambda C\Phi$,  
\begin{equation}\label{use vi}
\la i | g^* g | i \ra = \{g^* g v^{\omega_i}, v^{\omega_i}\} = \{g v^{\omega_i}, g v^{\omega_i}\} > 0.
\end{equation}

The Weyl group $W$ of a Lie algebra $\fg$ is the finite group generated by the reflections in the simple roots on $\fh'_0$ 
$$
s_i (\beta) = \beta - \beta(h_{\a_i}) \a_i, \quad 1\leq i\leq n.
$$
In the Weyl group, there is a unique element $\kappa\in W$ that is the longest element in the sense that when one writes it as a product of the simple reflections it has the maximal length. 


The weights of the irreducible representation $V^\lambda$ with a highest weight $\lambda$ are invariant under the Weyl group, and its lowest weight is $\kappa \lambda$ \cite{KosToda}*{Eq. (5.2.10)}.

\subsection{Principal grading and degrees of primitive adjoint-invariant functions}\label{sub-degree}
		Using $E_j$ from \er{propertyejw0}, define the so-called principal grading element 
	\begin{equation}\label{E0}
	E_0 = \sum_{j=1}^n E_j \in \fh_0,\quad\text{such that  }\a_i(E_0) = 1,\ \text{for }1\leq i\leq n.
	\end{equation}
	Define $\fg_j = \{ x\in \fg\,|\, [E_0, x]= j x\}$. Then 
	\begin{equation}\label{grading}
	\fg = \bigoplus_j \fg_j
	\end{equation}
	is the principal grading of $\fg$.
	
The degrees of the primitive homogeneous adjoint-invariant functions of the simple Lie algebras are listed as follows
\medskip
\begin{center}
\allowdisplaybreaks{
\begin{tabular}{cc}
\hline
Lie algebras &  degrees \\
\hline
$A_n$ & $2, 3, \cdots, n+1$ \\
$B_n$ & $2, 4, \cdots, 2n$ \\
$C_n$ & $2, 4, \cdots, 2n$ \\
$D_n$ & $2, 4, \cdots, 2n-2, n$ \\
$G_2$ & $2, 6$ \\
$F_4$ & $2, 6, 8, 12$ \\
$E_6$ & $2, 5, 6, 8, 9, 12$ \\
$E_7$ & $2, 6, 8, 10, 12, 14, 18$ \\
$E_8$ & $2, 8, 12, 14, 18, 20, 24, 30$ \\
\hline
\end{tabular}
}
\end{center}

\section{Cartan matrices for simple Lie algebras and their inverse matrices}
There are four infinite series of classical complex simple Lie algebras and five exceptional Lie algebras with the following Cartan matrices
\allowdisplaybreaks{
\begin{align*}
\label{Cartan-A}
&A_n =\fs\fl_{n+1}: 
\left(\begin{smallmatrix}
2 & -1 & & &\\
-1 & 2 & -1 & & \\
 & \ddots & \ddots & \ddots& \\
 & &-1 & 2 & -1\\
 & & & -1 & 2
 \end{smallmatrix}\right), 
&&B_n=\fs\fo_{2n+1} : \left(\begin{smallmatrix}
2 & -1 & & &\\
-1 & 2 & -1 & & \\
 & \ddots & \ddots & \ddots& \\
 & &-1 & 2 & -2\\
 & & & -1 & 2
 \end{smallmatrix}\right),\\
 &C_n = \fs\fp_{2n}: \left(\begin{smallmatrix}
2 & -1 & & &\\
-1 & 2 & -1 & & \\
 & \ddots & \ddots & \ddots& \\
 & &-1 & 2 & -1\\
 & & & -2 & 2
 \end{smallmatrix}\right),\ 
&& D_n = \fs\fo_{2n}: \left(\begin{smallmatrix}
2 & -1 & & & & \\
-1 & 2 & -1 & & & \\
 & \ddots & \ddots & \ddots& & \\
 & &-1 & 2 & -1 & -1\\
 & & & -1 & 2 & \\
 & & & -1 &  & 2\\
 \end{smallmatrix}\right),\\
 &G_2: \left(\begin{matrix}
 2 & -1\\
 -3 & 2
 \end{matrix}\right),
 && F_4: \left(\begin{smallmatrix}
 2 & -1 & & \\
 -1 & 2 & -2 & \\
  & -1 & 2 & -1\\
  & & -1 & 2
\end{smallmatrix}\right),\\
&E_6: \left(\begin{smallmatrix}
2 & & -1 & & & \\
 & 2 & & -1 & & \\
 -1 & & 2 & -1 & & \\
 & -1 & -1 & 2 & -1 & \\
 & & & -1 & 2 & -1 \\
 & & & & -1 & 2 
\end{smallmatrix}\right),
&&E_7: \left(\begin{smallmatrix}
2 & & -1 & & & & \\
 & 2 & & -1 & & & \\
 -1 & & 2 & -1 & & & \\
 & -1 & -1 & 2 & -1 & & \\
 & & & -1 & 2 & -1 & \\
 & & & & -1 & 2 & -1\\
 & & & & & -1 & 2
\end{smallmatrix}\right),\\
&E8: \left(\begin{smallmatrix}
2 & & -1 & & & & & \\
 & 2 & & -1 & & & & \\
 -1 & & 2 & -1 & & & & \\
 & -1 & -1 & 2 & -1 & & & \\
 & & & -1 & 2 & -1 & & \\
 & & & & -1 & 2 & -1 & \\
 & & & & & -1 & 2 & -1\\
 & & & & & & -1 & 2
\end{smallmatrix}\right).&&
\end{align*}
}In the above, the labelling of the roots for the exceptional Lie algebras follows \cite{Knapp}*{pp 180-1}. 
We require that $n\geq 2$ for $B_n$ and $C_n$, and $n\geq 3$ for $D_n$. Furthermore, we have the following isomorphisms
$$
B_2\cong C_2, \quad D_3 \cong A_3.
$$

The inverse of the Cartan matrices have the following form:
\begin{align*}
&A_n:
\left( {\begin{smallmatrix}
\frac{n}{n+1}& \frac{n-1}{n+1} &\frac{n-2}{n+1} &\cdots &\frac{2}{n+1} &\frac{1}{n+1}   \\
\frac{n-1}{n+1} &\frac{2n-2}{n+1} &\frac{2n-4}{n+1} &\cdots &\frac{4}{n+1} &\frac{2}{n+1}   \\
\frac{n-2}{n+1} &\frac{2n-4}{n+1} &\frac{3n-6}{n+1}  &\cdots &\frac{6}{n+1} &\frac{3}{n+1}   \\
\vdots &\vdots &\vdots&\ddots & \vdots &\vdots\\
\frac{2}{n+1}&\frac{4}{n+1}&\frac{6}{n+1}&\cdots&\frac{2n-2}{n+1}&\frac{n-1}{n+1}   \\
\frac{1}{n+1}&\frac{2}{n+1}&\frac{3}{n+1}&\cdots&\frac{n-1}{n+1}&\frac{n}{n+1}   \\
\end{smallmatrix} } \right),
&&B_n :
\left( {\begin{smallmatrix}
1&1&1&\cdots &1 &1\\
1&2&2&\cdots&2&2\\
1&2&3&\cdots&3&3\\
\vdots &\vdots &\vdots&\ddots & \vdots &\vdots\\
1&2&3&\cdots&n-1 &n-1\\
\frac{1}{2}&1&\frac{3}{2}&\cdots&\frac{n-1}{2}&\frac{n}{2}\\
\end{smallmatrix} } \right), \\
& C_n:
\left( {\begin{smallmatrix}
1&1&1&\cdots &1 &\frac{1}{2}\\
1&2&2&\cdots&2&1\\
1&2&3&\cdots&3&\frac{3}{2}\\
\vdots &\vdots &\vdots&\ddots & \vdots &\vdots\\
1&2&3&\cdots&n-1 &\frac{n-1}{2}\\
1&2&3&\cdots&n-1 &\frac{n}{2}\\
\end{smallmatrix} } \right), 
  && D_n:
  \left( {\begin{smallmatrix}
   1 & 1 & 1 & \cdots & 1 & \frac{1}{2} & \frac{1}{2}\\
    1 & 2 & 2& \cdots & 2 & 1 & 1 \\
 1 & 2 & 3 & \cdots & 3 & \frac{3}{2} & \frac{3}{2}\\
\vdots &\vdots &\vdots&\ddots & \vdots &\vdots &\vdots\\
   1 & 2 & 3 & \cdots & n-2 & \frac{n-2}{2} & \frac{n-2}{2}\\
\frac{1}{2} & 1 & \frac{3}{2} & \cdots & \frac{n-2}{2} & \frac{n}{4} & \frac{n-2}{4}\\
\frac{1}{2} & 1 & \frac{3}{2} & \cdots & \frac{n-2}{2} & \frac{n-2}{4} & \frac{n}{4}\\
  \end{smallmatrix} } \right), \\
&G_2:
 \left( {\begin{smallmatrix}
 2&1\\
3&2\\
  \end{smallmatrix} } \right), 
&&F_4:
 \left( {\begin{smallmatrix}
  2 &3 &4 &2 \\
3 &6 & 8& 4\\
2 &4&6&3 \\
1&2&3&2
  \end{smallmatrix} } \right), \\
&E_6 :
   \left( {\begin{smallmatrix}
   \frac{4}{3} & 1 & \frac{5}{3} & 2 & \frac{4}{3} & \frac{2}{3}\\
    1 & 2 & 2& 3 & 2 & 1  \\
 \frac{5}{3} & 2 & \frac{10}{3} & 4& \frac{8}{3} & \frac{4}{3}\\
   2 & 3 & 4 & 6 & 4 & 2 \\
\frac{4}{3} & 2 & \frac{8}{3} & 4 & \frac{10}{3}  & \frac{5}{3}\\
\frac{2}{3} & 1 & \frac{4}{3} & 2 & \frac{5}{3} & \frac{4}{3} \\
  \end{smallmatrix} } \right),
&&E_7:
 \left( {\begin{smallmatrix}
 2 &2 &3 &4 &3 &2 &1 \\
2& \frac{7}{2} & 4&6& \frac{9}{2}&3& \frac{3}{2} \\
3&4&6&8&6&4&2 \\
4&6&8&12&9&6&3\\
3&\frac{9}{2}&6&9&\frac{15}{2}&5&\frac{5}{2}\\
2&3&4&6&5&4&2\\
1&\frac{3}{2}&2&3&\frac{5}{2}&2&\frac{3}{2}\\
  \end{smallmatrix} } \right), \\
&E_8:
\left( {\begin{smallmatrix}
4&5&7&10&8&6&4&2\\
5&8&10&15&12&9&6&3\\
7&10&14&20&16&12&8&4\\
10&15&20&30&24&18&12&6\\
8&12&16&24&20&15&10&5\\
6&9&12&18&15&12&8&4\\
4&6&8&12&10&8&6&3\\
2&3&4&6&5&4&3&2\\
  \end{smallmatrix} } \right). 
\end{align*}

\medskip

\begin{bibdiv}
\begin{biblist}

\bib{ALW}{article}{
   author={Ao, Weiwei},
   author={Lin, Chang-Shou},
   author={Wei, Juncheng},
   title={On Toda system with Cartan matrix $G_2$},
   journal={Proc. Amer. Math. Soc.},
   volume={143},
   date={2015},
   number={8},
   pages={3525--3536},
   issn={0002-9939},
}

\bib{Feher1}{article}{
   author={Balog, J.},
   author={Feh{\'e}r, L.},
   author={O'Raifeartaigh, L.},
   author={Forg{\'a}cs, P.},
   author={Wipf, A.},
   title={Toda theory and $\scr W$-algebra from a gauged WZNW point of view},
   journal={Ann. Physics},
   volume={203},
   date={1990},
   number={1},
   pages={76--136},
   issn={0003-4916},
}

\bib{BM}{article}{
   author={Brezis, H.},
   author={Merle, F.},
   title={Uniform estimates and blow-up behavior for solutions of $-\Delta u=V(x)e^u$ in two dimensions},
   journal={Comm. Partial Differential Equations},
   volume={16},
   date={1991},
   number={8-9},
   pages={1223--1253},
   issn={0360-5302},
}

\bib{CL}{article}{
   author={Chen, Wen Xiong},
   author={Li, Congming},
   title={Classification of solutions of some nonlinear elliptic equations},
   journal={Duke Math. J.},
   volume={63},
   date={1991},
   number={3},
  pages={615--622},
   issn={0012-7094},
}
\bib{CWAnn}{article}{
   author={Chern, Shiing Shen},
   author={Wolfson, Jon G.},
   title={Harmonic maps of the two-sphere into a complex Grassmann manifold.
   II},
   journal={Ann. of Math. (2)},
   volume={125},
   date={1987},
   number={2},
   pages={301--335},
   issn={0003-486X},
}



\bib{Doliwa}{article}{
   author={Doliwa, Adam},
   title={Holomorphic curves and Toda systems},
   journal={Lett. Math. Phys.},
   volume={39},
   date={1997},
   number={1},
   pages={21--32},
   issn={0377-9017},
}

\bib{DPW}{article}{
   author={Dorfmeister, J.},
   author={Pedit, F.},
   author={Wu, H.},
   title={Weierstrass type representation of harmonic maps into symmetric
   spaces},
   journal={Comm. Anal. Geom.},
   volume={6},
   date={1998},
   number={4},
   pages={633--668},
   issn={1019-8385},
}
	
\bib{DS}{article}{
   author={Drinfel{\cprime}d, V. G.},
   author={Sokolov, V. V.},
   title={Lie algebras and equations of Korteweg-de Vries type},
   conference={
      title={Current problems in mathematics, Vol. 24},
   },
   book={
      series={Itogi Nauki i Tekhniki},
      publisher={Akad. Nauk SSSR Vsesoyuz. Inst. Nauchn. i Tekhn. Inform.},
      place={Moscow},
   },
   date={1984},
   pages={81--180},
}

\bib{Feher2}{article}{
   author={Feh{\'e}r, L.},
   author={O'Raifeartaigh, L.},
   author={Ruelle, P.},
   author={Tsutsui, I.},
   author={Wipf, A.},
   title={Generalized Toda theories and $\scr W$-algebras associated with
   integral gradings},
   journal={Ann. Physics},
   volume={213},
   date={1992},
   number={1},
   pages={1--20},
   issn={0003-4916},
}

\bib{FF}{article}{
   author={Feigin, Boris},
   author={Frenkel, Edward},
   title={Integrals of motion and quantum groups},
   conference={
      title={Integrable systems and quantum groups},
      address={Montecatini Terme},
      date={1993},
   },
   book={
      series={Lecture Notes in Math.},
      volume={1620},
      publisher={Springer, Berlin},
   },
   date={1996},
   pages={349--418},
}

\bib{Feher0}{article}{
   author={Forg{\'a}cs, P.},
   author={Wipf, A.},
   author={Balog, J.},
   author={Feh{\'e}r, L.},
   author={O'Raifeartaigh, L.},
   title={Liouville and Toda theories as conformally reduced WZNW theories},
   journal={Phys. Lett. B},
   volume={227},
   date={1989},
   number={2},
   pages={214--220},
   issn={0370-2693},
}


\bib{GH}{book}{
   author={Griffiths, Phillip},
   author={Harris, Joseph},
   title={Principles of algebraic geometry},
   note={Pure and Applied Mathematics},
   publisher={Wiley-Interscience [John Wiley \& Sons], New York},
   date={1978},
   pages={xii+813},
}

\bib{Guest}{book}{
   author={Guest, Martin A.},
   title={Harmonic maps, loop groups, and integrable systems},
   series={London Mathematical Society Student Texts},
   volume={38},
   publisher={Cambridge University Press, Cambridge},
   date={1997},
   pages={xiv+194},
   isbn={0-521-58932-0},
}

\bib{GL}{article}{
   author={Guest, Martin A.},
   author={Lin, Chang-Shou},
   title={Nonlinear PDE aspects of the tt* equations of Cecotti and Vafa},
   journal={J. Reine Angew. Math.},
   volume={689},
   date={2014},
   pages={1--32},
   issn={0075-4102},
}
\bib{GLI}{article}{
   author={Guest, Martin A.},
   author={Its, Alexander R.},
   author={Lin, Chang-Shou},
   title={Isomonodromy aspects of the $\rm tt^*$ equations of Cecotti and
   Vafa I. Stokes data},
   journal={Int. Math. Res. Not. IMRN},
   date={2015},
   number={22},
   pages={11745--11784},
   issn={1073-7928},
}

\bib{Humphrey}{book}{
   author={Humphreys, James E.},
   title={Introduction to Lie algebras and representation theory},
   series={Graduate Texts in Mathematics},
   volume={9},
   note={Second printing, revised},
   publisher={Springer-Verlag, New York-Berlin},
   date={1978},
   pages={xii+171},
   isbn={0-387-90053-5},
}


\bib{JW}{article}{
   author={Jost, J{\"u}rgen},
   author={Wang, Guofang},
   title={Classification of solutions of a Toda system in ${\Bbb R}^2$},
   journal={Int. Math. Res. Not.},
   date={2002},
   number={6},
   pages={277--290},
   issn={1073-7928},
}


\bib{Knapp}{book}{
   author={Knapp, Anthony W.},
   title={Lie groups beyond an introduction},
   series={Progress in Mathematics},
   volume={140},
   edition={2},
   publisher={Birkh\"auser Boston, Inc., Boston, MA},
   date={2002},
   pages={xviii+812},
   isbn={0-8176-4259-5},
}

\bib{K1}{article}{
   author={Kostant, Bertram},
   title={The principal three-dimensional subgroup and the Betti numbers of
   a complex simple Lie group},
   journal={Amer. J. Math.},
   volume={81},
   date={1959},
   pages={973--1032},
   issn={0002-9327},
}

\bib{K2}{article}{
   author={Kostant, Bertram},
   title={Lie group representations on polynomial rings},
   journal={Amer. J. Math.},
   volume={85},
   date={1963},
   pages={327--404},
   issn={0002-9327},
}

\bib{KosToda}{article}{
   author={Kostant, Bertram},
   title={The solution to a generalized Toda lattice and representation
   theory},
   journal={Adv. in Math.},
   volume={34},
   date={1979},
   number={3},
   pages={195--338},
   issn={0001-8708},
}

\bib{LS}{article}{
   author={Leznov, A. N.},
   author={Saveliev, M. V.},
   title={Representation of zero curvature for the system of nonlinear
   partial differential equations $x_{\alpha ,z\bar z}={\rm
   exp}(kx)_{\alpha }$ and its integrability},
   journal={Lett. Math. Phys.},
   volume={3},
   date={1979},
   number={6},
   pages={489--494},
   issn={0377-9017},
}

\bib{LSbook}{book}{
   author={Leznov, A. N.},
   author={Saveliev, M. V.},
   title={Group-theoretical methods for integration of nonlinear dynamical
   systems},
   series={Progress in Physics},
   volume={15},
   note={Translated and revised from the Russian;
   Translated by D. A. Leuites},
   publisher={Birkh\"auser Verlag},
   place={Basel},
   date={1992},
   pages={xviii+290},
   isbn={3-7643-2615-8},
}


\bib{LWY}{article}{
   author={Lin, Chang-Shou},
   author={Wei, Juncheng},
   author={Ye, Dong},
   title={Classification and nondegeneracy of $SU(n+1)$ Toda system with
   singular sources},
   journal={Invent. Math.},
   volume={190},
   date={2012},
   number={1},
   pages={169--207},
   issn={0020-9910},
}

\bib{LWZ17}{article}{
    author={Lin, Chang-Shou},
   author={Yang, Wen},
author={Zhong, Xuexiu},
   title={Toda systems: Local mass and new estimates I},
 journal={Preprint},
date={2017},
}

\bib{InvC}{article}{
   author={Lusztig, George},
   author={Tits, Jacques},
   title={The inverse of a Cartan matrix},
   journal={An. Univ. Timi\c soara Ser. \c Stiin\c t. Mat.},
   volume={30},
   date={1992},
   number={1},
   pages={17--23},
}

\bib{N1}{article}{
   author={Nie, Zhaohu},
   title={Solving Toda field theories and related algebraic and differential
   properties},
   journal={J. Geom. Phys.},
   volume={62},
   date={2012},
   number={12},
   pages={2424--2442},
   issn={0393-0440},
}

\bib{jnmp}{article}{
   author={Nie, Zhaohu},
   title={On characteristic integrals of Toda field theories},
   journal={J. Nonlinear Math. Phys.},
   volume={21},
   date={2014},
   number={1},
   pages={120--131},
   issn={1402-9251},
}

\bib{BC}{article}{
   author={Nie, Zhaohu},
   title={Classification of solutions to Toda systems of types C and B with
   singular sources},
   journal={Calc. Var. Partial Differential Equations},
   volume={55},
   date={2016},
   number={3},
   pages={55:53},
   issn={0944-2669},
}

\bib{JLT}{article}{
   author={Nie, Zhaohu},
   title={Toda field theories and integral curves of standard differential systems},
   journal={Journal of Lie Theory},
  volume={27},
  date={2017},
  number={2},
  pages={377--395},
  url={'arXiv:1510.04808'},
}

\bib{PT}{article}{
   author={Prajapat, J.},
   author={Tarantello, G.},
   title={On a class of elliptic problems in ${\Bbb R}^2$: symmetry and
   uniqueness results},
   journal={Proc. Roy. Soc. Edinburgh Sect. A},
   volume={131},
   date={2001},
   number={4},
   pages={967--985},
   issn={0308-2105},
}

\bib{Sharpe}{book}{
   author={Sharpe, R. W.},
   title={Differential geometry},
   series={Graduate Texts in Mathematics},
   volume={166},
   note={Cartan's generalization of Klein's Erlangen program;
   With a foreword by S. S. Chern},
   publisher={Springer-Verlag, New York},
   date={1997},
   pages={xx+421},
   isbn={0-387-94732-9},
}

\bib{Taran}{book}{
   author={Tarantello, Gabriella},
   title={Selfdual gauge field vortices},
   series={Progress in Nonlinear Differential Equations and their
   Applications, 72},
   note={An analytical approach},
   publisher={Birkh\"auser Boston, Inc., Boston, MA},
   date={2008},
   pages={xiv+325},
   isbn={978-0-8176-4310-2},
}


\bib{ Uhlenbeck }{article}{
   author={Uhlenbeck, Karen},
   title={Harmonic maps into Lie groups: classical solutions of the chiral
   model},
   journal={J. Differential Geom.},
   volume={30},
   date={1989},
   number={1},
   pages={1--50},
   issn={0022-040X},
}


\bib{Yang}{book}{
   author={Yang, Yisong},
   title={Solitons in field theory and nonlinear analysis},
   series={Springer Monographs in Mathematics},
   publisher={Springer-Verlag, New York},
   date={2001},
   pages={xxiv+553},
   isbn={0-387-95242-X},
}
		
\end{biblist}
\end{bibdiv}

\bigskip
\end{document}